\numberwithin{equation}{section}
\newcommand{\R}{\mathbb R}
\newcommand{\C}{\mathbb C}
\newcommand{\N}{\mathbb N}
\newcommand{\be}{\begin{equation}}
\newcommand{\ee}{\end{equation}}
\newcommand{\ba}{\begin{eqnarray}}
\newcommand{\ea}{\end{eqnarray}}
\newtheorem{thm}{Theorem}[section]
\newtheorem{lem}{Lemma}[section]
\newtheorem{prop}{Proposition}[section]
\newtheorem{rem}{Remark}[section]
\newtheorem{cor}{Corollary}[section]
\numberwithin{equation}{section}
\begin{document}
\title{Exact controllability of a linear Korteweg-de Vries equation by the flatness approach}

\author[Martin]{Philippe Martin}
\address{Centre Automatique et Syst\`emes (CAS), MINES ParisTech, PSL Research University, 60 Boulevard Saint-Michel, 75272 Paris Cedex 06, France}
\email{philippe.martin@mines-paristech.fr}

\author[Rivas]{Ivonne Rivas}
\address{Universidad del Valle, Ciudadela Universitaria Mel\'endez, Calle 13 No. 100--00, A.A. 25360, Cali, Colombia}
 \email{ivonne.rivasl@correounivalle.edu.co}

\author[Rosier]{Lionel Rosier}
\address{Centre Automatique et Syst\`emes (CAS) and Centre de Robotique, MINES ParisTech, PSL Research University, 60 Boulevard Saint-Michel, 75272 Paris Cedex 06, France}
\email{lionel.rosier@mines-paristech.fr}

\author[Rouchon]{Pierre Rouchon}
\address{Centre Automatique et Syst\`emes (CAS), MINES ParisTech, PSL Research University, 60 Boulevard Saint-Michel, 75272 Paris Cedex 06, France}
\email{pierre.rouchon@mines-paristech.fr}

\date{}

\maketitle

\begin{abstract}
We consider a linear Korteweg-de Vries equation on a bounded domain with a left Dirichlet boundary control.
The controllability to the trajectories of such a system was proved in the last decade by using Carleman estimates.
Here, we go a step further by establishing the exact controllability in a space of analytic functions with the aid of the flatness approach.   
\end{abstract}

\vspace{0.3cm}

\textbf{2010 Mathematics Subject Classification: 37L50, 93B05}  

\vspace{0.5cm}

\textbf{Keywords:}  Korteweg-de Vries equation; exact controllability; controllability to the trajectories; flatness approach; Gevrey class; 
smoothing effect.


\section{Introduction}
The Korteweg-de Vries (KdV) equation is a well-known dispersive equation that may serve as a model for the propagation of 
gravity waves on the surface of a canal or a lake.  It reads
\be
\label{I1}
\partial _ty + \partial _x^3 y + y\partial _x y + \partial _x y =0, 
\ee 
where $t$ is time, $x$ is the horizontal spatial coordinate,  and $y=y(x,t)$ stands for the deviation of the fluid surface from rest position. 
As usual, $\partial _t y=\partial y/\partial t$, $\partial _x y=\partial y/\partial x$, $\partial _x^3y=\partial ^3 y /\partial x^3$,  etc.  

When the equation is considered on a bounded interval $(0,L)$, it has to be supplemented with three boundary conditions, for instance
\be
\label{I2} 
y(0,t)=u(t),\quad y(L,t)=v(t),\quad \partial _x y(L,t)=w(t), 
\ee 
and an initial condition
\be
\label{I3}
y(x,0)=y_0(x). 
\ee
 
The controllability of the Korteweg-de Vries equation with various boundary controls has been considered by many authors 
since several decades (see e.g. the surveys \cite{RZsurvey,cerpa}).  The exact controllability in the energy space $L^2(0,L)$ 
 was derived by Rosier in \cite{R1997}
(resp. by Glass and Guerrero in \cite{GG2010})  with $w$ as the only control input (resp. with $v$ as the only control input). On the other hand, 
if we take $u$ as the only control input, the exact controllability fails in the energy space  \cite{R2004}, because of the smoothing effect. 
Nevertheless, both  the null-controllability and the controllability to the trajectories hold with the left Dirichlet boundary control, see \cite{R2004} and \cite{GG2008}.  
The aim of the present paper is to go a step further by investigating the exact controllability in a ``narrow'' space with the left Dirichlet boundary control. 
Due to the smoothing effect, the space in which the exact controllability  can hold is a space of {\em analytic functions}. 
For the sake of simplicity, we will focus on a linear KdV equation (removing the nonlinear term $y\partial _x y$). Performing a scaling 
in time and space, there is no loss of generality in assuming that $L=1$. By a translation, we can also assume that $x\in (-1,0)$. 
The first-order derivative term will assume the form 
$a\partial _x y$ where $a\in \R _+$ is some constant. The case $a=1$ corresponds to the linearized KdV equation 
\be
\label{I4}
\partial _ty + \partial _x^3 y +   \partial _x y =0, 
\ee
while the case $a=0$ corresponds to the ``simplified'' linearized KdV equation 
\be
\label{I5}
\partial _ty + \partial _x^3 y  =0,
\ee
which is often considered when investigating the Cauchy problem on the line  $\R$ (instead of a bounded interval) by doing the change of unknown $\tilde y(x,t)=y(x+t,t)$. 

The paper will be concerned with the control properties  of the system: 
\ba
\partial _t y + \partial _x ^3 y + a\partial _x y =0,&& x\in (-1,0),\ t\in (0,T), \label{A1}\\
y(0,t)=\partial _x y(0,t)=0,&&  t\in (0,T), \label{A2}\\
y(-1,t)=u(t),&& t\in (0,T), \label{A3}\\
y(x,0)=y_0(x),&& x\in (-1,0), \label{A4}
\ea
where $y_0=y_0(x) $ is the initial data and $u=u(t)$ is the control input. 

We shall address the following issues: \\
1. (Null controllability) Given any $y_0\in L^2(-1,0)$, can we find a control $u$ such that the solution $y$ of \eqref{A1}-\eqref{A4} 
satisfies $y(.,T)=0$?\\
2. (Reachable states) Given any $y_1\in {\mathcal R}$ (a subspace of $L^2(-1,0)$ defined thereafter), can we find a control $u$ such that the solution $y$ of \eqref{A1}-\eqref{A4} with $y_0=0$ satisfies $y(.,T)=y_1$?\\    
We shall investigate both issues by the flatness approach and derive an exact controllability in $\mathcal R$ by combining
our results. 

The null controllability of \eqref{A1}-\eqref{A4} was established in \cite{R2004} (see also \cite{GG2008}) by using a Carleman estimate. 
The control input $u$ was found in a Sobolev space (e.g. $u\in H^{ \frac{1}{2} -\varepsilon}(0,T)$ for all 
$\varepsilon >0$ if 
$y_0\in L^2(-1,0)$, see \cite{GG2008}). Here, we shall improve this result by designing a control input in a Gevrey class. 
Furthermore, the trajectory and the control will be given explicitly as the sums of series parameterized by the flat output.  
To state our result, we need introduce a few notations. A function $u\in C^\infty ( [ t_1,t_2])$ is said to be {\em Gevrey of 
order $s\ge 0$ on $[t_1,t_2]$}  if there exist some constant $C,R\ge 0$ such that 
\[
\vert \partial _t ^n u(t) \vert \le C \frac{ (n!) ^s}{R^n}\quad \forall n\in \N, \ \forall t\in [t_1,t_2]. 
\] 
The set of functions Gevrey of order $s$ on $[t_1,t_2]$ is denoted by $G^s ([t_1,t_2])$.  A function 
$y\in C^\infty ([x_1,x_2] \times [t_1,t_2] )$ is said to be Gevrey of order $s_1$ in $x$ and $s_2$ in $t$ on $[x_1,x_2]\times[t_1,t_2]$
if there exist some constants $C,R_1,R_2>0$ such that 
\[
\vert \partial _x^{n_1} \partial _t ^{n_2}  y(x,t) \vert \le C \frac{ (n_1 ! )^{s_1}  (n_2 !)^{s_2}  }{R_1^{n_1} R_2^{n_2}}
\quad \forall n_1,n_2\in \N, \ \forall (x,t)\in [x_1,x_2]\times [t_1,t_2]. 
\] 
The set of functions Gevrey of order $s_1$ in $x$ and $s_2$ in $t$  on $[x_1,x_2] \times [t_1,t_2]$ is denoted by 
$G^{s_1,s_2}  ( [x_1,x_2]\times [t_1,t_2])$.  

The first main result in this paper is a null controllability result with a control input in a Gevrey class. 
\begin{thm}
\label{thm1} Let $y_0\in L^2(-1,0)$, $T>0$, and $s\in [\frac{3}{2},3)$. Then there exists a control input $u\in G^s([0,T])$ such that the solution 
$y$ of \eqref{A1}-\eqref{A4} satisfies $y(.,T)=0$. Furthermore,  it holds that
\be
y\in C([0,T], L^2(-1,0))\cap G^{\frac{s}{3}, s} ([-1,0]\times [\varepsilon ,T])\quad \forall \varepsilon \in (0,T). 
\ee 
\end{thm}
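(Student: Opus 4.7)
The plan is to split $[0,T]$ into two phases: a \emph{smoothing phase} on $[0,T/2]$ with $u\equiv 0$, where the KdV dynamics itself regularises the $L^2$ datum, followed by a \emph{flatness phase} on $[T/2,T]$, where an explicit Gevrey control steers the smoothed state exactly to zero at $t=T$. The Gevrey regularity of the spliced control across $t=T/2$ comes for free: on the first phase $y(-1,\cdot)\equiv 0$, so all left time-derivatives of $u$ at $T/2$ vanish, and using the PDE to propagate $t$-derivatives from $x$-derivatives then forces the right time-derivatives to vanish as well once the flatness trajectory is matched with the smoothed state at $t=T/2$.

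For the smoothing step I would establish, as a separate proposition, a dispersive Gevrey smoothing effect of the form
\[
\|y(\cdot,T/2)\|_{G^{s/3}([-1,0])} \le C_s \|y_0\|_{L^2(-1,0)} \qquad \forall\, s\in \bigl[\tfrac{3}{2},3\bigr).
\]
This is the analog for the order-$3$ operator $\partial_t+\partial_x^3+a\partial_x$ of the analytic smoothing of the heat semigroup, the cube-root Gevrey index reflecting the dispersion order. A proof can be obtained by iterating a Kato-type local smoothing inequality combined with elliptic regularity, or by a Carleman commutator argument of the kind already used in \cite{R2004,GG2008}.

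For the flatness phase I would parametrise solutions of \eqref{A1}-\eqref{A2} by the scalar flat output $z(t):=\partial_x^2 y(0,t)$. Writing $\alpha_n(t):=\partial_x^n y(0,t)$, the PDE and the boundary conditions give the recurrence
\[
\alpha_{n+3} = -\dot\alpha_n - a\,\alpha_{n+1},\qquad \alpha_0=\alpha_1\equiv 0,\ \alpha_2=z,
\]
which expresses each $\alpha_n$ as a finite linear combination of derivatives $z^{(i)}$ with $3i\le n$. Summing the Taylor series in $x$ yields the ansatz $y(x,t)=\sum_{i\ge 0}g_i(x)\,z^{(i)}(t)$ with explicit polynomials $g_i$ satisfying the Cauchy-type bound $|\partial_x^k g_i(x)|\le C R^i / (3i+2-k)!$ on $[-1,0]$. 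Whenever $z\in G^s([T/2,T])$ with $s<3$, the series and all its mixed derivatives converge absolutely and uniformly, producing a solution in $G^{s/3,s}([-1,0]\times [T/2,T])$ whose left trace $u(t):=\sum_i g_i(-1)\,z^{(i)}(t)$ is automatically Gevrey of order $s$ in $t$.

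It remains to choose $z$ on $[T/2,T]$ so that the flatness trajectory matches $y(\cdot,T/2)$ at $t=T/2$ and vanishes together with all its time-derivatives at $t=T$. Inverting the triangular recurrence recovers, from the Taylor coefficients of $y(\cdot,T/2)$ at $x=0$, the target sequence $z_i:=z^{(i)}(T/2)$, with $|z_i|\le C M^i (i!)^s$ for any $s\ge \frac{3}{2}$ (using $((3i)!)^{s/3}\le C'\,3^{si}(i!)^s$ by Stirling and the Gevrey-$s/3$ bound on $y(\cdot,T/2)$). Since $s>1$, a Borel--Ritt-type Gevrey interpolation on $[T/2,T]$ produces $z\in G^s([T/2,T])$ with $z^{(i)}(T/2)=z_i$ and $z^{(i)}(T)=0$ for every $i\ge 0$. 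By uniqueness for the Gevrey Cauchy problem at $x=0$, the flatness series then coincides on $[T/2,T]$ with the true solution of \eqref{A1}-\eqref{A4} issued from $y(\cdot,T/2)$ and driven by $u|_{[T/2,T]}$, and vanishes identically at $t=T$. The most delicate step I anticipate is establishing the dispersive Gevrey smoothing with the sharp cube-root index; the Borel--Ritt construction, the Cauchy bounds on the $g_i$, and the splicing at $t=T/2$ should then reduce to essentially routine combinatorics and compatibility verifications.
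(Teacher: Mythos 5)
Your outline is correct in its two main ingredients, which are exactly the paper's: a smoothing phase with $u\equiv 0$ driven by the semigroup (the Gevrey-$\frac12$-in-$x$, $\frac32$-in-$t$ estimate of Proposition \ref{prop2}, obtained precisely by iterating the Kato-type gain $\Vert Ay(t)\Vert\le Ct^{-3/2}\Vert y_0\Vert$ together with the elliptic equivalence of Lemma \ref{lem10}), followed by a flatness phase where $y=\sum_i g_i(x)z^{(i)}(t)$ converges in $G^{s/3,s}$ for $z\in G^s$, $s<3$ (Proposition \ref{prop1}). Where you genuinely diverge is in the splicing: the paper does not reconstruct the jet of the flat output at the junction and does not use any Borel--Ritt interpolation for Theorem \ref{thm1}; it simply takes $z(t)=\phi_s\bigl(\frac{t-\tau}{T-\tau}\bigr)\,\partial_x^2\bar y(0,t)$, i.e.\ the actual trace of the free solution multiplied by a Gevrey-$s$ step function (whence the constraint $s\ge\frac32$, the trace being only $G^{3/2}$ in time), and then identifies the flatness series with $\bar y$ on $(0,\tau)$ by Holmgren, which gives continuity down to $t=0$ and $u\equiv 0$ before $\tau$ for free. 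Your route --- extract $z_i=z^{(i)}(T/2)$ from the spatial jet of $\bar y(\cdot,T/2)$ at $x=0$, interpolate by a Gevrey Borel theorem flat at $t=T$, then identify --- can be made to work, but it costs extra machinery that the paper reserves for Theorem \ref{thm2} (Proposition \ref{prop4}), and it has three points you must actually verify: (i) the identification at $t=T/2$ needs matching of the \emph{full} jet $\partial_x^n$ at $x=0$, which follows from the triangular family $P^n,\partial_xP^n,\partial_x^2P^n$ (the values of the first two vanish for both functions by the boundary conditions \eqref{B2}/\eqref{K2} plus the equation, the third by your choice of $z_i$), together with the fact that both functions are Gevrey of order $<1$ in $x$ on all of $[-1,0]$ so that equal jets imply equality on the whole interval --- ``uniqueness of the Gevrey Cauchy problem'' as you state it is too vague; (ii) the $G^s$ regularity of the spliced control at $T/2$ rests on $P^n\bar y(-1,T/2)=0$, which indeed follows from $\bar y(-1,t)\equiv 0$ and smoothness up to the boundary, but should be said; (iii) the smoothing you can actually prove on the bounded domain by your iteration is Gevrey $\frac12$ in $x$, not the ``sharp cube-root'' index (that $G^{1/3}$ gain is only established in the paper on the line for compactly supported data, Proposition \ref{prop20}, and is not needed since $\frac12\le s/3$ exactly when $s\ge\frac32$). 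Also, for $a>0$ the $g_i$ are not polynomials (see \eqref{B13}); the relevant bound is $|g_i(x)|\le |x|^{3i+2}/(3i+2)!$ of Lemma \ref{lem1}. In short: same skeleton, but the paper's cutoff-times-trace construction plus Holmgren buys a leaner proof, while your jet-interpolation version is closer in spirit to the paper's proof of Theorem \ref{thm2}.
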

The second issue investigated in this paper is the problem of the reachable states. For the heat equation, an important step in the  characterization of the reachable states was given in \cite{MRRreachable} with the aid of the flatness approach.  It was proven there 
that reachable states can be extended as holomorphic functions on some square of the complex plane, and conversely 
that holomorphic functions defined on a ball centered at the origin and with a sufficiently large radius give by restriction to the real line reachable states. See also \cite{DE} for an improvement of this result as far as the domain of analyticity of the reachable states is concerned.  

To the best knowledge of the authors, the determination of the reachable states for \eqref{A1}-\eqref{A4}  has not been addressed so far. 
From the controllability to the trajectories established in \cite{R2004,GG2008}, we know only that any function $y_1=y_1(x)$
that can be written as $y_1(x)=\bar y(x,T)$ for some trajectory $\bar y$ of \eqref{A1}-\eqref{A4} associated with some  
$y_0\in L^2(-1,0)$ and $u=0$ is reachable. But such a function is in $G^\frac{1}{2} ([-1,0])$\footnote{It is conjectured that it belongs to $G^\frac{1}{3} ([-1,0])$.} with $y_1(-1)=0$ and $(\partial _x ^3 + a\partial _x )^n \, y_1(-1)=0$ for all $n\ge 1$,  according to Proposition \ref{prop1} (see below). Proceeding as in \cite{MRRreachable}, we shall obtain a class of reachable states that are {\em less regular} than those for the controllability to the trajectories (namely $y_1\in G^1 ([-1,0])$ for which no boundary condition has to be imposed at $x=-1$. 

To state our second main result, we need to introduce again some notations. For $z_0\in \C$ and $R>0$, we denote by $D(z_0,R)$ the open disk 
\[
D(z_0,R) :=\{ z\in \C; \ |z-z_0|<R \} ,
\]  
and by $H ( D (z_0,R) )$ the set of holomorphic (i.e. complex analytic) functions on $D(z_0,R)$. Introduce the operator
\[
Py:= \partial _x ^3 y + a \partial _x y,
\]
so that \eqref{A1} can be written
\be
\label{A9}
\partial _t y  + Py=0.
\ee
Since $\partial _t $ and $P$ commute, it follows from \eqref{A9} that for all $n\in \N ^*$
\be
\label{A10}
\partial _t^n  y + (-1) ^{n-1}P^n\,  y=0
\ee
where $P^n=P\circ P^{n-1}$ and $P^0=Id$.  We are in a position to define the set of reachable states: for any $R>1$, let 
\begin{multline*}
{\mathcal R} _R := \{ y\in C^0([-1,0]); \ \exists z\in H( D(0,R) ), \ y=z_{ \vert [-1,0] }, \textrm{ and } 
(P^n\, y) (0)=\partial _x (P^n\, y) (0)=0\ \ \forall n\ge 0\} . 
\end{multline*}

The following result is the second main result in this paper. 
\begin{thm}
\label{thm2}
Let $a\in \R_+$, $T>0$, and $R>R_0:=e ^{(3e)^{-1}} (1+a)^\frac{1}{3} >1$.  Pick any $y_1\in {\mathcal R}_R$. Then there exists 
a control input $u\in G^3([0,T])$ such that the solution $y$ of \eqref{A1}-\eqref{A4} with $y_0=0$ satisfies $y(.,T)=y_1$. Furthermore,
$y\in G^{1,3}([-1,0]\times [0,T])$. 
\end{thm}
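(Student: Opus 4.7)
The plan is to use the flatness approach at $x=0$. Because the boundary conditions at $x=0$ force $y(0,t)=\partial_x y(0,t)=0$, the natural flat output is $z(t) := \partial_x^2 y(0,t)$. Seeking a solution as the formal power series
$$y(x,t) = \sum_{n\ge 0} \frac{x^n}{n!}\, g_n(t), \qquad g_n(t) := \partial_x^n y(0,t),$$
the PDE \eqref{A1} is equivalent to the recursion $g_{n+3}(t) = -g_n'(t) - a\, g_{n+1}(t)$ with $g_0=g_1=0$ and $g_2=z$, so each $g_n$ is an explicit linear combination of $z,z',\dots,z^{(\lfloor n/3\rfloor)}$ whose coefficients are polynomials in $a$.

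The first step is the \emph{direct parameterization}: show that if $z \in G^3([0,T])$ is flat at $t=0$, then the series above converges and defines a solution $y \in G^{1,3}([-1,0]\times[0,T])$ of \eqref{A1}--\eqref{A2} with $y(\cdot,0)=0$ and trace $u(t):=y(-1,t) \in G^3([0,T])$. An induction on the recursion yields a bound of the form $|g_n(t)| \le C_1\, (\lfloor n/3\rfloor!)^3 \, \sigma^{-n}$ whenever $z$ satisfies $|z^{(k)}(t)| \le C_0 (k!)^3 /\rho^k$, with $\sigma$ depending explicitly on $\rho$ and on $(1+a)$. Via Stirling's estimate $(3k)!/(k!)^3 \sim 27^k/\sqrt k$, this converts into $|g_n(t)|/n! \lesssim R^{-n}$ where $R^3$ is essentially $27\sigma^3/(1+a)$; by choosing $\rho$ sufficiently large, $R$ can be made as large as needed.

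The second step is to \emph{invert the parameterization at $t=T$}: one seeks $z$ such that $g_n(T) = y_1^{(n)}(0)$ for every $n$. The assumption $y_1 \in \mathcal R_R$, namely $(P^n y_1)(0) = \partial_x(P^n y_1)(0) = 0$ for all $n\ge 0$, is precisely what is needed so that the "zero slots" of the recursion ($g_{3k}$ and $g_{3k+1}$ modulo lower order) are consistent with the Taylor coefficients of $y_1$; consequently the sequence $c_k := z^{(k)}(T)$ is uniquely determined by $y_1$ through a triangular algebraic system. Cauchy estimates on $H(D(0,R))$ give $|y_1^{(n)}(0)| \le M n!\, R^{-n}$, and combined with the $27^k$ Stirling factor yield $|c_k| \le C (k!)^3 (27/R^3)^k$, so the sequence $(c_k)$ is Gevrey-3 of radius $R^3/27$.

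The main obstacle, and the source of the explicit threshold $R_0$, is the \emph{Borel-type extension} in the Gevrey-3 class: construct $z \in G^3([0,T])$ flat at $t=0$ with prescribed $z^{(k)}(T) = c_k$. A standard summation with Gevrey cutoffs of step type, e.g.\ $z(t) = \sum_k c_k\, \phi_k(t)\, (t-T)^k/k!$ for well-chosen $\phi_k \in G^3$, produces such a $z$, but loses a universal factor $e^{1/e}$ in the Gevrey radius. Chaining this with the direct parameterization shows that the whole scheme succeeds precisely when $R^3/27 > e^{1/e}(1+a)/27$, i.e.\ when $R > e^{1/(3e)}(1+a)^{1/3}=R_0$. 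Once $z$ is in hand, the series defines the desired $y\in G^{1,3}([-1,0]\times[0,T])$; the initial condition $y(\cdot,0)=0$ follows from flatness of $z$ at $0$; and $y(\cdot,T)=y_1$ follows from the uniqueness of the Taylor expansion at $x=0$ of an analytic function, which is shared by $y(\cdot,T)$ and $y_1$ by construction.
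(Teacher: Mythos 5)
Your proposal is correct and follows essentially the same route as the paper: parameterize by the flat output $\partial_x^2 y(0,\cdot)$, establish the critical Gevrey-3 flatness estimates, read off the data $z^{(k)}(T)=\pm\,\partial_x^2 P^k y_1(0)$ from Cauchy estimates on the holomorphic extension of $y_1$, and interpolate by a Borel-type construction whose universal $e^{1/e}$ loss yields exactly the threshold $R_0=e^{1/(3e)}(1+a)^{1/3}$. The only differences are presentational: you expand in powers of $x$ via the recursion $g_{n+3}=-g_n'-a\,g_{n+1}$ and book the $(1+a)$ loss in the direct parameterization, whereas the paper works with the generating functions $g_i(x)$ (whose bound $|g_i(x)|\le |x|^{3i+2}/(3i+2)!$ in Lemma \ref{lem1} is $a$-independent) and books the $(1+a)^n$ factor when estimating $b_n=\partial_x^2 P^n y_1(0)$ through Lemma \ref{lem10} — either bookkeeping, but not both at once, gives the stated $R_0$.
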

\begin{rem}
\begin{enumerate}
\item As for the heat equation, it is likely that any reachable state for the linear Korteweg-de Vries equation
can be extended as an holomorphic function on some open set in $\C$.  
\item The reachable states corresponding to the controllability to the trajectories  are in $G^\frac{1}{2} ( [-1,0] )$, so that they 
can be extended as functions in $H(\C )$. By contrast, the reachable functions in Theorem \ref{thm2} need not be holomorphic on 
the whole set $\C$: they can have poles outside $D(0,R)$.
\item The set ${\mathcal R}_R$ takes a very simple form when $a=0$. Indeed, in that case
\begin{eqnarray*}
{\mathcal R}_R &=& \{ y\in C([-1,0]); \ \exists z\in H(D(0,R)),\ y=z_{ \vert [-1,0]} \textrm{ and } 
 \partial _x ^{3n} y(0)=\partial _x ^{3n+1} y (0)=0\ \  \ \forall n\in \N \}, \\
 &=&  \{ y\in C([-1,0]); \ \exists (a_n)_{n\in \N} \in {\R ^\N}, \ \sum_{n=0}^\infty  | a_n | r^{3n} <\infty \ \forall r\in (0,R) \textrm{ and } \\
 &&\qquad  \qquad\qquad\quad 
 y(x)=\sum_{n=0}^\infty a_n x^{3n+2} \ \ \forall  x\in [-1,0] \} .
\end{eqnarray*} 
Note that $y(-1)$ needs not be  $0$ for $y\in {\mathcal R}_R$. Examples of functions in ${\mathcal R}_R$ include
(i) the polynomial functions of the form $y(x)=\sum_{n=0}^N a_n x^{3n+2}$; (ii) the entire function 
\[
y(x)= e^x + j e^{jx} + j^2 e^{j^2x} 
\] 
where $j:=e^{i\frac{2\pi}{3}}$. Note that $y$ is real-valued and $y(-1)>0$  (see Fig. 1).
\begin{center}
\includegraphics[width=0.3\columnwidth]{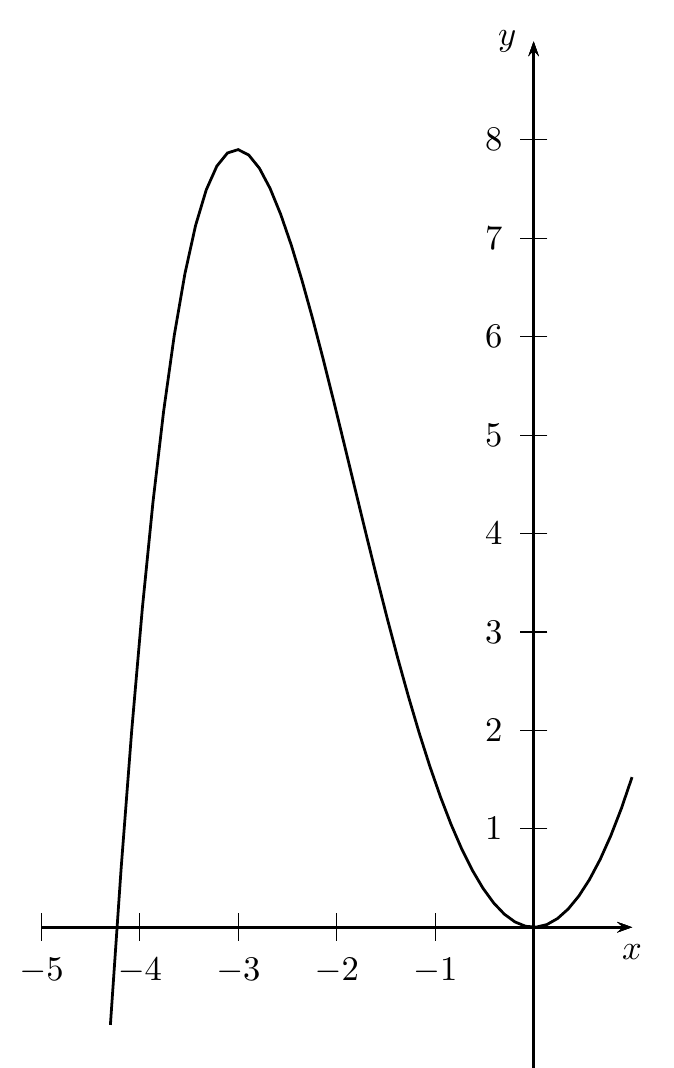}\\
Fig. 1. The function $y(x)=  e^x + j e^{jx} + j^2 e^{j^2x}$. 
 \end{center}

\end{enumerate}
\end{rem}

Combining Theorem \ref{thm1} and Theorem \ref{thm2}, we obtain the 
following result which implies the exact controllability of \eqref{A1}-\eqref{A4} 
in ${\mathcal R}_R$  for $R>R_0$.

\begin{cor}
\label{cor1}
Let $a\in \R_+$, $T>0$, $R>R_0$, $y_0\in L^2(-1,0)$ and $y_1\in {\mathcal R}_R$. 
Then there exists $u\in G^3([0,T])$ such that the solution of \eqref{A1}-\eqref{A4} satisfies  $y(.,T)=y_1$. 
\end{cor}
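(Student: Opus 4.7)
The natural strategy is a two--stage concatenation. Fix any $s \in [\tfrac32, 3)$. First, apply Theorem \ref{thm1} on the time interval $[0, T/2]$ with initial data $y_0$ to obtain a control $u_1 \in G^s([0, T/2])$ such that the associated solution $y^{(1)}$ of \eqref{A1}--\eqref{A4} satisfies $y^{(1)}(\cdot, T/2) = 0$ in $L^2(-1,0)$. Next, apply Theorem \ref{thm2} on the time interval $[T/2, T]$ with zero initial data to obtain a control $u_2 \in G^3([T/2, T])$ such that the associated solution $y^{(2)}$ of \eqref{A1}--\eqref{A4} (with $y^{(2)}(\cdot, T/2) = 0$) satisfies $y^{(2)}(\cdot, T) = y_1$. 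Then define
\[
u(t) := \begin{cases} u_1(t) & \text{for } t \in [0, T/2], \\ u_2(t) & \text{for } t \in [T/2, T], \end{cases}
\qquad
y(x,t) := \begin{cases} y^{(1)}(x,t) & \text{for } t \in [0, T/2], \\ y^{(2)}(x,t) & \text{for } t \in [T/2, T]. \end{cases}
\]
By construction $y(\cdot, T) = y_1$, and since $y^{(1)}(\cdot, T/2) = 0 = y^{(2)}(\cdot, T/2)$, the function $y$ belongs to $C([0,T]; L^2(-1,0))$ and solves \eqref{A1}--\eqref{A4} on $[0,T]$ in the appropriate weak sense.

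The key regularity claim to verify is that the glued control $u$ belongs to $G^3([0,T])$. Since $G^{s} \subset G^3$ for $s \le 3$, each piece is individually Gevrey of order $3$ on its subinterval, so the only issue is what happens at the junction $t = T/2$. This is exactly where I expect the main technical point to lie, and it must be extracted from (or built into) the proofs of Theorems \ref{thm1} and \ref{thm2}: the flatness construction yields controls whose flat output is a Gevrey bump (``step function'') chosen so that all derivatives vanish at the two endpoints of the control horizon. Consequently $\partial_t^n u_1 (T/2) = 0$ and $\partial_t^n u_2 (T/2) = 0$ for every $n \in \mathbb{N}$, so $u$ is $C^\infty$ across $T/2$, and the uniform Gevrey-$3$ estimates on each side glue into a global Gevrey-$3$ estimate on $[0,T]$. (Similarly, one can arrange $\partial_t^n u(0) = \partial_t^n u(T) = 0$ for all $n$, though this is not required by the statement.)

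The conclusion $u \in G^3([0,T])$ together with $y(\cdot, T) = y_1$ then yields the corollary. I do not foresee any additional difficulty beyond checking that the vanishing-derivative property of the two controls at $T/2$ is indeed produced by the flatness constructions of Theorems \ref{thm1} and \ref{thm2}; this is standard in the flatness approach, as the control and trajectory are both parametrized by a Gevrey step function which can be chosen to vanish together with all its derivatives at any prescribed endpoint.
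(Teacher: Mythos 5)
Your argument is correct, but it takes a genuinely different route from the paper, which exploits linearity rather than time-splitting: since \eqref{A1}--\eqref{A4} is linear in $(y_0,u)$, the paper simply takes $u=u_1+u_2$ on the \emph{whole} interval $[0,T]$, where $u_1$ is the control of Theorem \ref{thm1} steering $y_0$ to $0$ at time $T$ and $u_2$ is the control of Theorem \ref{thm2} steering $0$ to $y_1$ at time $T$; superposition gives $y(\cdot,T)=0+y_1=y_1$, and $u_1+u_2\in G^3([0,T])$ because $G^s([0,T])\subset G^3([0,T])$ for $s\le 3$. That proof uses only the statements of the two theorems. Your concatenation at $t=T/2$ instead requires a fact not contained in those statements, namely that all time derivatives of $u_1$ vanish at the end of its horizon and all time derivatives of $u_2$ vanish at the start of its horizon, so that the glued control is $C^\infty$ and Gevrey of order $3$ across the junction. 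You flag this correctly, and the property does hold for the controls actually constructed in the paper: in Theorem \ref{thm1} the flat output is $z(t)=\phi_s\bigl(\frac{t-\tau}{T-\tau}\bigr)\partial_x^2\bar y(0,t)$, so $z^{(i)}(T)=0$ for all $i$ and hence, by the uniform convergence of the differentiated series established in Proposition \ref{prop1}, $u_1^{(n)}(T)=0$ for all $n$; in Theorem \ref{thm2} the flat output $z=gf$ vanishes identically on $[0,\tau]$, so $u_2\equiv 0$ near its initial time. One also needs the standard observation that gluing the two trajectories at $T/2$, where both states equal $0$, yields the solution on $[0,T]$ associated with the glued control. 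So your proof is valid at the cost of reopening the flatness constructions; the paper's superposition argument buys a two-line proof independent of how the controls were built, whereas your staging argument is the one that would survive in a nonlinear setting where superposition is unavailable.
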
   

Since system \eqref{A1}-\eqref{A4} is linear, it is sufficient to pick $u=u_1+u_2$
where $u_1$ is the control given by Theorem \ref{thm1} for $y_0$ and $u_2$ is the control given by Theorem \ref{thm2} for $y_1$.  

The paper is outlined as follows. Section \ref{section2} is devoted to the null controllability of the linear KdV equation. The flatness property is established in Proposition \ref{prop1}. The smoothing effect for the linear KdV equation 
from $L^2(-1,0)$ to $G^\frac{1}{2}( [-1,0] )$ is derived in Proposition \ref{prop2}. The section ends with the proof of Theorem \ref{thm1}.    
Section \ref{section3} is concerned with the study of the reachable states. The flatness property is extended to the limit case $s=3$ in Proposition \ref{prop3}. Theorem 
\ref{thm2} then follows from Proposition \ref{prop3} and some version of Borel theorem borrowed from \cite{MRRreachable}.

\section{Null controllability by the flatness approach}
\label{section2}

In this section, we are concerned with the null controllability of \eqref{A1}-\eqref{A4}. 
\subsection{Flatness property}
Our first task is to establish the flatness property, namely the 
fact that the solution of \eqref{A1}-\eqref{A3} can be parameterized by the ``flat ouput'' $\partial _x ^2 y(0,.)$. More precisely, we consider the ill-posed system 
\ba
\partial _t y + \partial _x ^3 y + a\partial _x y = 0,&&  x\in (-1,0), \ t\in (0,T), \label{B1}\\
y(0,t)=\partial _x y(0,t)=0, && t\in (0,T), \label{B2}\\
\partial _x ^2 y(0,t)=z(t),&&t\in (0,T),\label{B3}
\ea
and we prove that it admits a solution $y\in G^{\frac{s}{3}, s} ([-1,0]\times [0,T])$ whenever $z\in G^s ([0,T])$ and $1\le s <3$. 

The trajectory $y$ and the control input $u$ can be  written as 
\ba
y(x,t)            &=& \sum_{i\ge 0} g_i (x) z^{(i)} (t), \label{B4}\\
u(t) = y(-1,t) &=& \sum_{i\ge 0} g_i (-1) z^{ ( i ) } (t)  \label{B5}  
\ea
where the {\em generating functions} $g_i$, $i\ge 0$, are defined as in \cite{MRRparabolic}. More precisely, the function $g_0$ is defined as the solution of the Cauchy  problem 
\ba
g_0''' (x) + ag_0' (x)&=& 0, \qquad x\in (-1,0), \label{B6}\\
g_0(0)=g_0'(0)&=&0, \label{B7}\\
g_0''(0)&=&1 \label{B8}
\ea
(where $'=d/dx$), while the function $g_i$ for  $i\ge 1$ is defined inductively as the solution of the Cauchy problem 
\ba
g_i''' (x)+ a g_i' (x)&=& - g_{i-1} (x), \qquad x\in (-1,0), \label{B9}\\
g_i(0)=g_i'(0)=g_i''(0) &=&0. \label{B10}
\ea
It is well known that $g_i$ for $i\ge 1$ can be expressed in terms of $g_0$ and $g_{i-1} $ as 
\be
\label{B11}
g_i(x)= - \int_0^x g_0(x-\xi) g_{i-1}(\xi )\, d\xi.
\ee
\begin{rem}
\begin{enumerate}
\item If $a=0$, then it follows from direct integrations of \eqref{B6}-\eqref{B8} and \eqref{B9}-\eqref{B10} that 
\be
g_i(x)= (-1)^i \frac{x^{3i+2}}{(3i+2)!}, \quad x\in [-1,0], \ i\ge 0. 
\label{B12}
\ee
\item If $a>0$, then the solution of \eqref{B6}-\eqref{B8} reads
\be
g_0(x)= \frac{1}{a} (1-\cos ( \sqrt{a} x)). 
\label{B13}
\ee 
\end{enumerate}
\end{rem}

To ensure the convergence of the series in \eqref{B4}, we first have to establish some estimates for $\Vert g_i\Vert _{L^\infty (-1,0) }$.  

\begin{lem}
\label{lem1}
Let $a\in \R_+$. Then for all $i\ge 0$
\be
\label{B14}
|g_i(x)| \le  \frac{|x| ^{3i+2}}{(3i+2)!} \quad \forall x\in [-1,0].
\ee
\end{lem}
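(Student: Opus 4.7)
The plan is to prove the bound by induction on $i$, exploiting the integral representation \eqref{B11} together with a pointwise bound for $g_0$ that holds uniformly in $a\ge 0$.

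First I would dispose of the base case $i=0$. For $a=0$ the inequality is in fact an equality by \eqref{B12}. For $a>0$ the explicit formula \eqref{B13} gives $g_0(x)=(1-\cos(\sqrt a\, x))/a$, and the elementary inequality $1-\cos\theta \le \theta^2/2$ (valid for every real $\theta$) immediately yields $0\le g_0(x)\le x^2/2=|x|^2/2!$. Note that this also establishes the bound $|g_0(x-\xi)|\le (x-\xi)^2/2$ for all $x,\xi\in[-1,0]$, which will be used in the inductive step irrespective of whether $a$ is zero or positive.

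For the inductive step, I would assume $|g_{i-1}(\xi)|\le |\xi|^{3i-1}/(3i-1)!$ on $[-1,0]$ and plug into \eqref{B11}. For $x\in[-1,0]$, integrating from $0$ to $x$ and taking absolute values,
\[
|g_i(x)|\;\le\; \int_x^0 \frac{(x-\xi)^2}{2}\cdot\frac{|\xi|^{3i-1}}{(3i-1)!}\,d\xi.
\]
The change of variable $\xi = xt$ with $t\in[0,1]$ factors out $|x|^{3i+2}$ and reduces the right-hand side to the Beta integral $\int_0^1 (1-t)^2 t^{3i-1}\,dt = 2!(3i-1)!/(3i+2)!$, which collapses exactly to $|x|^{3i+2}/(3i+2)!$. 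The factorials conspire so that the bound is saturated; this is expected since the inequality is tight for $a=0$.

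There is no real obstacle here: the only minor nuisance is keeping track of signs since $x<0$, but the use of $|\cdot|$ throughout together with the symmetric substitution $\xi=xt$ makes this transparent. The key structural point is that the sharp pointwise estimate $|g_0(x)|\le x^2/2$ is available for every $a\ge 0$, so the constant $a$ never enters the bound and the induction runs exactly as in the case $a=0$ handled by \eqref{B12}.
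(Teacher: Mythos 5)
Your proof is correct, and it handles the inductive step differently from the paper. Both arguments share the same skeleton: induction on $i$, base case from \eqref{B12} (for $a=0$) or from \eqref{B13} together with $1-\cos\theta\le\theta^2/2$ (for $a>0$), and the convolution formula \eqref{B11} for the step. The paper, however, does not bound the kernel $g_0(x-\xi)$ directly: it integrates by parts twice in \eqref{B11}, using $g_0(0)=g_0'(0)=0$ from \eqref{B7} to kill the boundary terms, so that the kernel becomes $g_0''(x-\xi)=\cos\bigl(\sqrt{a}(x-\xi)\bigr)$, bounded by $1$, and then estimates the resulting triple iterated integral of $|\sigma|^{3i-1}/(3i-1)!$, which integrates up to $|x|^{3i+2}/(3i+2)!$. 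You instead keep \eqref{B11} as it stands, use the uniform pointwise bound $0\le g_0(u)\le u^2/2$ (valid for every $a\ge 0$, so the two cases merge into one induction), and evaluate the convolution exactly via the Beta integral $\int_0^1(1-t)^2t^{3i-1}\,dt=2!\,(3i-1)!/(3i+2)!$, which reproduces the same sharp constant. Your route is slightly more economical (no integration by parts, no boundary-term bookkeeping, no case distinction in the inductive step), while the paper's trades the Beta identity for the cruder but very robust bound $|g_0''|\le 1$; both are fully rigorous and give exactly \eqref{B14}.
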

\begin{proof}
If $a=0$, then \eqref{B14} is a direct consequence of \eqref{B12}. Assume now that $a>0$ and let us prove \eqref{B14} by induction on $i$. It follows from \eqref{B13} that 
\be
0\le g_0(x) \le \frac{x^2}{2}, \quad \forall x\in  [-1,0],
\label{B15}
\ee
so that \eqref{B14} is true for $i=0$. 
Assume now that \eqref{B14} is true for some $i-1\ge 0$. Then, integrating by parts twice in \eqref{B11} and using \eqref{B7}, we see that 
\begin{eqnarray*}
	g_i(x) &=& -\underbrace{\Bigl[g_0(x-\xi)\int_0^\xi g_{i-1}(\zeta)d\zeta\Bigr]_0^x}_{=0}
	\quad -\,\int_0^xg_0'(x-\xi) \big( \int_0^\xi g_{i-1}(\zeta)d\zeta \big) d\xi\\
	&=& -\underbrace{\Bigl[g_0'(x-\xi)\int_0^\xi \big( \int_0^\zeta g_{i-1}(\sigma )d\sigma  \big) d\zeta\Bigr]_0^x}_{=0}
	\quad - \,\int_0^x\underbrace{g_0''(x-\xi)}_{\cos \big( \sqrt{a} ( x-\xi) \big) }   \big( \int_0^\xi \big( \int_0^\zeta g_{i-1}(\sigma )d\sigma\big)  d\zeta \big) d\xi.
\end{eqnarray*}
It follows that 
\[
|g_i (x)| \le \left\vert \int_0^x   \big( \int_0^\xi \big( \int_0^\zeta  |g_{i-1}(\sigma ) | d\sigma\big)  d\zeta \big) d\xi  \right\vert
\le \left\vert \int_0^x   \big( \int_0^\xi \big( \int_0^\zeta  \frac{\sigma ^{3i-1} }{ (3i-1) ! } d\sigma\big)  d\zeta \big) d\xi  \right\vert 
= \frac{ |x|^{3i+2}} { (3i+2)! },
\]
as desired.
\end{proof}

We are now in a position to solve system \eqref{B1}-\eqref{B3}. 
\begin{prop}
\label{prop1}
Let $s\in [1,3)$, $z\in G^s ([0,T])$, and $y=y(x,t)$ be as in \eqref{B4}. Then $y\in G^{\frac{s}{3}, s} ( [-1,0]\times [0,T] )$ and it solves \eqref{B1}-\eqref{B3}.  
\end{prop}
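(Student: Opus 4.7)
The plan is to verify that the formal series \eqref{B4} and all of its termwise partial derivatives converge uniformly on $[-1,0]\times[0,T]$ with Gevrey-type bounds, and then to read off from \eqref{B6}--\eqref{B10} that the resulting function solves \eqref{B1}--\eqref{B3}. The input is a Gevrey bound $|z^{(i)}(t)|\le C(i!)^s/R^i$ on $[0,T]$ together with Lemma~\ref{lem1}; note that the ratio $(i!)^s/(3i+2)!$ decays geometrically for $s<3$ by Stirling, which already gives uniform convergence of \eqref{B4} itself.

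The key step is to produce a pointwise estimate for $\partial_x^n g_i(x)$ that is good in both indices. For $n\le 3i+2$, differentiating the integral representation \eqref{B11} $n$ times (and repeatedly using $g_0(0)=g_0'(0)=0$) combined with Lemma \ref{lem1} yields
\[
|\partial_x^n g_i(x)|\le \frac{|x|^{3i+2-n}}{(3i+2-n)!}\qquad\text{(up to a constant depending on }a).
\]
For $n>3i+2$ one uses the ODE $g_i'''=-ag_i'-g_{i-1}$ to rewrite $g_i^{(n)}$ in terms of $g_i^{(n-2)}$ and $g_{i-1}^{(n-3)}$, and iterates: each lowering of the order by three decreases $i$ by one (at the cost of a factor involving $a$), so one reaches the previous regime after finitely many steps. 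The resulting uniform bound has the shape
\[
|\partial_x^n g_i(x)|\le \frac{M^n\,|x|^{\max(3i+2-n,0)}}{\bigl(\max(3i+2-n,0)\bigr)!},\qquad x\in[-1,0],\ n,i\ge 0,
\]
for some $M=M(a)>0$.

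Combining these two ingredients, the formally differentiated series is dominated by
\[
|\partial_x^{n_1}\partial_t^{n_2}y(x,t)|\le CM^{n_1}\sum_{i\ge 0}\frac{|x|^{\max(3i+2-n_1,0)}}{(\max(3i+2-n_1,0))!}\cdot\frac{((i+n_2)!)^s}{R^{i+n_2}}.
\]
Splitting the sum at $i\simeq n_1/3$, using $(i+n_2)!^s\le 2^{s(i+n_2)}(i!)^s(n_2!)^s$, and bounding $(i!)^s/(3i)!\le K^i$ (valid for $s<3$), one sums the resulting geometric-type series to obtain an estimate of the form $C'(n_1!)^{s/3}(n_2!)^s/(R_1^{n_1}R_2^{n_2})$. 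The exponent $s/3$ in $x$ arises because $(3i+2-n_1)!$ behaves like $((n_1/3)!)^3\sim n_1!$. This uniform convergence justifies termwise differentiation, so $y\in G^{s/3,s}([-1,0]\times[0,T])$. The PDE \eqref{B1} then follows from \eqref{B9} after an index shift in $i$, and \eqref{B2}--\eqref{B3} follow from $g_i(0)=g_i'(0)=0$ for all $i\ge 0$, $g_0''(0)=1$, and $g_i''(0)=0$ for $i\ge 1$.

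The main obstacle is the second half of the estimate on $\partial_x^n g_i$: when $n>3i+2$, neither Lemma \ref{lem1} nor direct differentiation of \eqref{B11} supplies the required control, and the ODE-based recursion must be organized so that the accumulated constants remain geometric in $n$. Once this bookkeeping is under control, the remainder of the argument is a fairly standard majorization of the double series.
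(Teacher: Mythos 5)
Your overall strategy --- termwise two-index bounds on $\partial_x^{n_1}\partial_t^{n_2}\bigl(g_i(x)z^{(i)}(t)\bigr)$, uniform convergence of all differentiated series, then reading \eqref{B1}--\eqref{B3} off the defining properties of the $g_i$ --- is sound, and for $a=0$ your bound on $\partial_x^n g_i$ is exact, so the argument closes. But for $a>0$ everything hinges on the estimate you yourself defer (``the main obstacle''), and as stated it is both unproved and not quite what your sketch delivers. Differentiating \eqref{B11} $n$ times produces boundary terms $g_0^{(k)}(0)\,\partial_x^{n-1-k}g_{i-1}(x)$ for $k\ge 2$, so Lemma \ref{lem1} alone does not suffice: you must run an induction on $i$ carrying the full two-index derivative estimate, and the constants then accumulate at each step in $i$, yielding a bound of the shape $A^{n}B^{i}\,|x|^{\max(3i+2-n,0)}/\max(3i+2-n,0)!$ rather than $M^{n}$ with $M$ independent of $i$. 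Such an induction does close (each step in $i$ costs a factor controlled by $\sum_{k\ge 2}|g_0^{(k)}(0)|A^{-k}<\infty$ for $A>\sqrt{a}$, since $|g_0^{(k)}(0)|\le a^{(k-2)/2}$), and the extra $B^{i}$ is harmless in your final summation because, after splitting at $i\approx n_1/3$, it only contributes geometric factors; likewise your heuristic ``$(3i+2-n_1)!$ behaves like $((n_1/3)!)^3$'' should rather read: the threshold terms $i\approx n_1/3$ contribute $((n_1/3)!)^{s}\sim (n_1!)^{s/3}$ up to geometric factors. So the gap is fillable, but the deferred bookkeeping is genuinely the heart of the proof and must be written out.

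This is also exactly where your route differs from the paper's. The paper never estimates $\partial_x^n g_i$ at all: by \eqref{Q1}, $\partial_t^m P^n\bigl(g_iz^{(i)}\bigr)=(-1)^n g_{i-n}\,z^{(i+m)}$, so powers of $P$ applied to each term are controlled by Lemma \ref{lem1} alone, and the passage from bounds on $\sum_{j\le n}\Vert P^j\cdot\Vert_\infty$ to bounds on all $x$-derivatives up to order $3n$ is done once and for all by the norm-equivalence Lemma \ref{lem10}, whose constant $K^{n}$ is geometric in $n$ and whose proof is a soft open-mapping argument requiring no induction on the $g_i$. That lemma is precisely the bookkeeping device your plan is missing. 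Your approach would buy a more explicit pointwise picture of the generating functions (and is complete as written when $a=0$), at the price of a two-index induction; the paper's argument is shorter and carries over verbatim to the limit case $s=3$ in Proposition \ref{prop3}.
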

\begin{proof}
We need to estimate the behavior of the constants in the equivalence of norms in $W^{n,p}([-1,0])$ as $n\to \infty$. 
For $n\in \N$, $p\in [1,\infty ]$, and $f\in W^{n,p}(-1,0)$, we denote $\Vert f\Vert _p = \Vert f\Vert _{L^p(-1,0)}$ and 
\[
\Vert f\Vert _{n,p} = \sum_{i=0}^n \Vert \partial _x ^i \, f\Vert _{p}. 
\]
The following result will be used several times. Its proof is given in appendix, for the sake of completeness. 
\begin{lem}
\label{lem10} Let $p\in [1,\infty]$ and $P=\partial _x ^3 + a\partial _x$, where $a\in \R _+$. Then there exists a constant $K=K(p,a)>0$ such that 
for all $n\in \N$, 
\be
\label{P1}
(1+\frac{1}{a}) ^{-1}(1+a)^{-n} \sum_{i=0}^n \Vert P^i f\Vert _p \le \Vert f\Vert _{3n,p} \le K^n \sum_{i=0}^n \Vert P^i f \Vert _p, \quad \forall f\in W^{3n,p}(-1,0).   
\ee
\end{lem}  
We follow closely \cite{MRRheat}. Pick any $z\in G^s([0,T])$ for some $s\in [0,3)$. We can find some numbers 
$M>0$ and $R<1$ such that 
\[
\vert z^{(i)} (t) \vert \le M\frac{(i!)^s}{R^i}, \quad \forall i\in \N, \ \forall t\in [0,T].
\]
Pick any $m,n\in \N$. Then 
\be
\label{Q1}
\partial _t ^m P^n ( g_i(x) z^{ (i) } (t) ) = 
\left\{ 
\begin{array}{ll}
z^{(i+m)} (t) (-1)^n g_{i-n} (x) &\textrm{ if } i-n\ge 0, \\
0 &\textrm{ if } i-n<0. 
\end{array}
\right.  
\ee
Assume that $i\ge n$. Setting $j=i-n$ and $N=n+m$, so that $j+N=i+m$, we have that 
\[
\left\vert 
\partial _t ^m P^n \big(  g_i(x) z^{ (i) } (t) \big)
\right\vert 
\le M \frac{(i+m)! ^s}{R^{i+m}} \frac{1}{(3(i-n)+2)!} \le M \frac{(j+N)! ^s}{R^{j+N}} \frac{1}{(3j+2)!} \cdot
\]
Let 
\[
S:=\sum_{i\ge n} |\partial _t ^m P^n (g_i(x)z^{(i)} (t)) | .
\]
Using the classical estimate $ (j+N) ! \le 2^{j+N} j ! \, N ! $ and the equivalence $(3j) ! \sim  3^{3j+\frac{1}{2} } (2\pi j)^{-1} (j!)^3$ which follows at once from Stirling formula, we obtain that 
\begin{eqnarray*}
S  &\le& M\, \sum_{j\ge 0} \frac{(j+N)!^s }{R^{j+N}} \frac{1}{(3j+2)!}\\
&\le& M'\,  \sum_{j\ge 0} \frac{ (2^{j+N} j!\, N!)^s }{ R^{j+N}} 
\frac{2\pi (j+1)} { (3j+2)(3j+1) 3^{3j+\frac{1}{2}}  ( j !)^3} \\
&\le& M'' \frac{ (N !)^s}{ (\frac{R}{2^s})^N}
\end{eqnarray*} 
for some positive constants $M',M"$. Indeed, since $s<3$, we have that 
\[
\sum_{j\ge 0} \frac{ (2^j j!\, )^s }{ R^{j}} 
\frac{2\pi (j+1)} { (3j+2)(3j+1) 3^{3j+\frac{1}{2}} ( j !)^3} < +\infty . 
\]
Using again the fact that $N!=(n+m)!\le 2^{n+m}n!\, m!$, we arrive to 
\[
S\le M''  \frac{ n! ^s m ! ^s }{ R_2^n R_1^m}
\]
with $R_1=R_2=R/2^{s}$. 
 Let $K$ be as in Lemma \ref{lem10} for $p=\infty$. Then we have
\begin{eqnarray*}
\sum_{i\ge n} \Vert \partial _t ^m ( g_i(x) z^{( i) } (t) )\Vert _{3n,\infty} 
&\le & K^n \sum_{i\ge n} \ \sum_{0\le j\le n} \Vert \partial _t ^m P^j (g_i(x)z^{(i)} (t))\Vert _\infty \\
&\le&  M'' K^n \sum_{0\le j\le n} \left(  \frac{ j! ^s m ! ^s }{ R_2^j R_1^m} \right) \\
&\le& M'''   \frac{ n!^s }{R_2'^n} \frac{m!^s}{R_1^m} 
\end{eqnarray*}
for some $R_2'>0$ and some $M'''>0$. 
This shows that the series of derivatives $\partial _t ^m\partial _x ^l\big( g_i (x)z^{ (i) } (t) \big)$ is uniformly convergent 
on $[-1,0]\times [0,T]$ for all $m,l\in \N$, so that $y\in C^\infty ([-1,0]\times [0,T])$ and it satisfies for $l\le 3n$
\[
|\partial _t ^m \partial _x ^l \, y(x,t) |\le M'''   \frac{ n!^s }{R_2'^n} \frac{m!^s}{R_1^m} \quad \forall x\in [-1,0], \ \forall t\in [0,T]
\] 
for some constant $M'''>0$. 
Note that 
\[
n! ^s \sim \left( \frac{ 2\pi n (3n)! }{3^{3n+\frac{1}{2}}} \right) ^\frac{s}{3} \cdot
\]
It follows that if $l\in \{ 3n-2,3n-1,3n\}$, then 
$n! ^s / R_2'^n \le C' l! ^\frac{s}{3} /R_2''^l $ for some $C'>0$ and  $R_2''>0$. This yields 
\[
\vert \partial _t^m\partial _x ^l \, y(x,t) \vert \le C'M''' \frac{m! ^s}{R_1^m} \frac{l !^\frac{s}{3} }{R_2''^l}, \quad \forall x\in [-1,0], \  \forall t\in [0,T], 
\]
as desired. The fact that $y$ solves  \eqref{B1}-\eqref{B3} is obvious.
\end{proof}

\subsection{Smoothing effect}
We now turn our attention to the smoothing effect. We show that any solution $y$ of the
initial value problem \eqref{A1}-\eqref{A4} with $u\equiv 0$ and $y_0\in L^2(-1,0)$
is a function  Gevrey of order $1/2$ in $x$ and $3/2$ in $t$ for $t>0$. 
\begin{prop}
\label{prop2}
Let $y_0\in L^2(-1,0)$ and $u(t)=0$ for $t\in \R _+$. Then the solution
$y$ of \eqref{A1}-\eqref{A4} satisfies $y\in G^{\frac{1}{2},\frac{3}{2}}( [-1,0]\times [\varepsilon, T])$ for all $0<\varepsilon <T<\infty$. More precisely, there exist
some positive constant $K,R_1,R_2$ such that 
\be
\label{smoothing}
\vert \partial _t^n\partial _x^p \, y(x,t)\vert \le 
K t^{-\frac{3n+p+3}{2}} \frac{n! ^\frac{3}{2}}{R_1^n}\frac{p!^\frac{1}{2}}{R_2^p}\quad \forall p,n\in \N ,\   \forall t\in (0,T], \ \forall x\in [-1,0]. 
\ee
\label{prop2}
\end{prop}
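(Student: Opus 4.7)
The plan is to reduce \eqref{smoothing} to a quantitative $L^2$ smoothing estimate for $v_n := P^ny = (-1)^n\partial_t^n y$, and then to invoke Lemma \ref{lem10} to redistribute the resulting spatial regularity as a mix of time and space derivatives. First I would install the semigroup framework: the integration-by-parts identity
\begin{equation*}
\int_{-1}^0 y\,Py\,dx = \tfrac{1}{2}|\partial_x y(-1)|^2 \ge 0
\end{equation*}
together with maximality shows via Lumer--Phillips that $-P$ with domain $\{y\in H^3(-1,0):y(-1)=y(0)=\partial_x y(0)=0\}$ generates a contraction $C_0$-semigroup $S(t)$ on $L^2(-1,0)$, so $y(t)=S(t)y_0$ and $\|y(t)\|_{L^2}\le\|y_0\|_{L^2}$. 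Since the homogeneous boundary conditions are preserved by $\partial_t$ (because $u\equiv 0$), each $v_n$ solves the same equation with the same boundary data and inherits the same dissipation.

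The core smoothing ingredient is a Kato-type identity obtained by multiplying \eqref{A1} by $(1+x)y$ and integrating over $(-1,0)$: the weight $1+x$ kills every boundary contribution at $x=-1$, the conditions $y(0)=\partial_x y(0)=0$ handle the other endpoint, and one obtains
\begin{equation*}
\frac{1}{2}\frac{d}{dt}\int_{-1}^0(1+x)y^2\,dx + \frac{3}{2}\int_{-1}^0(\partial_x y)^2\,dx = \frac{a}{2}\int_{-1}^0 y^2\,dx,
\end{equation*}
i.e.\ a gain of one spatial derivative in $L^2_tL^2_x$. Applied to $v_n$, iterated three times, and combined with the dissipation $\frac{d}{dt}\|v_n\|_{L^2}^2 = -|\partial_x v_n(-1)|^2$ to absorb the boundary fluxes, this estimates $v_{n+1}=Pv_n$ in terms of $v_n$. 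To convert averaged-in-time control into pointwise control and to extract the correct power of $t$, I would weight the identity by $t^{3n+\alpha}$; the commutator $\partial_t(t^{3n+\alpha})$ feeds a lower-order term back into the induction, and each unit of $n$ costs a factor of $t^{-3/2}$. Induction on $n$ should then produce
\begin{equation}\label{goalL2}
\|v_n(\cdot,t)\|_{L^2(-1,0)} + t^{1/2}\|\partial_x v_n(\cdot,t)\|_{L^2(-1,0)} \le K\,\frac{(n!)^{3/2}}{R^n}\,t^{-\frac{3n+1}{2}}\|y_0\|_{L^2},\qquad 0<t\le T.
\end{equation}

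Once \eqref{goalL2} is established, the pointwise bound \eqref{smoothing} follows by book-keeping. Lemma \ref{lem10} with $p=\infty$, together with the embedding $H^1(-1,0)\hookrightarrow L^\infty(-1,0)$ applied to each $v_j$ ($0\le j\le n$), yields $\|y(\cdot,t)\|_{W^{3n,\infty}}\lesssim \tilde K^n (n!)^{3/2} R_1^{-n} t^{-(3n+3)/2}\|y_0\|_{L^2}$; Stirling's formula, used exactly as at the end of the proof of Proposition \ref{prop1}, then reallocates the $3n+p$ spatial derivatives appearing on the left into $n$ applications of $P$ (costing $(n!)^{3/2}/R_1^n$) and $p$ pure spatial derivatives (costing $(p!)^{1/2}/R_2^p$), delivering \eqref{smoothing}. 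The main obstacle, as I see it, is the inductive step producing \eqref{goalL2}: the chain of weighted multipliers is algebraically classical, but extracting exactly the $(n!)^{3/2}$ rate rather than a slightly worse exponent demands careful tracking of the combinatorial constants generated by the time-weight derivatives. Two further technical points to watch are that the boundary flux $|\partial_x v_n(-1)|^2$ produced at each iteration must be absorbed into the dissipation term rather than discarded, and that the passage from the averaged $L^2_t$ bound to a pointwise-in-$t$ bound should be carried out with the usual short-time trick of evaluating on $[t/2,t]$, which is what inserts the extra $-3/2$ in the exponent of $t$ in \eqref{smoothing}.
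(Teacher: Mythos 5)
Your overall architecture is the right one and matches the paper's endgame: reduce \eqref{smoothing} to a quantitative bound on $\Vert P^n y(\cdot,t)\Vert_{L^2}$ of the form $C^n (n!)^{3/2} t^{-3n/2}\Vert y_0\Vert_{L^2}$, then pass to $L^\infty$ bounds on pure spatial derivatives via Lemma \ref{lem10} and Sobolev embedding, and finally treat mixed derivatives by writing $\partial_t^n\partial_x^p y=(-1)^nP^n\partial_x^p y$ and expanding $P^n$ binomially; the dissipation identity and the $(1+x)$-multiplier (Kato) identity you state are also correct and are exactly the inputs \eqref{E1}--\eqref{E2} used in the paper. The genuine gap is the inductive step you yourself flag as the main obstacle: it is not merely a matter of ``careful tracking of combinatorial constants'', and the mechanism you propose would fail as stated. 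Gaining one application of $P$ requires gaining three spatial derivatives, and you cannot simply ``iterate the Kato identity three times'' on $v_n=P^ny$: the identity relies on the boundary conditions $w(0)=\partial_x w(0)=w(-1)=0$, which are satisfied by $v_n$ but not by $\partial_x v_n$ or $\partial_x^2 v_n$ (e.g. $\partial_x v_n(-1)\neq 0$ and $\partial_x^2 v_n(0)\neq 0$). Applying the multiplier to these derivatives produces new uncontrolled boundary traces of second and third derivatives which the single dissipated flux $\vert\partial_x v_n(-1)\vert^2$ cannot absorb. Likewise, your short-time trick converting the averaged $L^2_t$ gain into a pointwise-in-$t$ bound needs a quasi-monotonicity statement of the type $\Vert y(\cdot,t)\Vert_{H^1}\le C\Vert y(\cdot,s)\Vert_{H^1}$ for $s<t$, and your scheme provides no substitute for it.

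The paper resolves precisely these two points by an operator-theoretic detour rather than by weighted multipliers: since $Ay(\cdot,t)=e^{tA}Ay_0$ satisfies the same boundary conditions, the $L^2$ contraction applied to $Ay$ together with the elliptic estimate $\Vert f\Vert_{H^3}\le C(\Vert f\Vert_{L^2}+\Vert Pf\Vert_{L^2})$ (Lemma \ref{lem22}) and the interpolation $X_1=[X_0,X_3]_{1/3}$ gives the missing monotonicity in $H^1$; integrating the Kato estimate then yields $\Vert y(\cdot,t)\Vert_{H^1}\le Ct^{-1/2}\Vert y_0\Vert_{L^2}$ (Claim 1), which bootstraps to $\Vert y(\cdot,t)\Vert_{H^{p+1}}\le Ct^{-1/2}\Vert y_0\Vert_{H^p}$ for $p\le 3$ (Claim 2). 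The Gevrey-$3/2$ rate in $n$ is then obtained essentially for free, not by tracking commutators with time weights, but by splitting $[0,t]$ into $n$ equal subintervals and applying the one-step bound $\Vert Ay(\cdot,t)\Vert_{L^2}\le C' t^{-3/2}\Vert y_0\Vert_{L^2}$ on each, giving $\Vert A^ny(\cdot,t)\Vert_{L^2}\le C'^n n^{3n/2}t^{-3n/2}\Vert y_0\Vert_{L^2}$ and hence $(n!)^{3/2}$ by Stirling. To make your proof complete you would either have to reproduce this semigroup-plus-interpolation argument, or genuinely carry out the weighted-energy induction with correct treatment of the boundary traces, which is a substantial piece of work that the proposal leaves open.
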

\begin{proof}
Using \eqref{smoothing} on intervals of length one, we can, without loss of generality, assume that $T=1$.

Let us introduce the operator $Ay= - Py= -\partial _x ^3 y -a\partial _x y$ with domain 
\[
D(A)= \{ y\in H^3(-1,0); \ \ y(-1)=y(0)=\partial _x y (0)=0\} \subset L^2(-1,0).  
\] 
Then it follows from \cite{R1997} that $A$ generates a semigroup of contractions in $L^2(-1,0)$, and tht a global Kato smoothing effect holds. More precisely, if 
$y=e^{tA}y_0$ is the mild solution issuing from $y_0$ at $t=0$, then we have for all $T>0$
\ba
\Vert y(T)\Vert _{L^2} &\le & \Vert y_0\Vert _{L^2} \label{E1}\\
\int_0^T \Vert \partial _x  y(.,t)\Vert ^2_{L^2} \, dt &\le& \frac{1}{3} (aT+1)
\Vert y_0\Vert ^2_{L^2}, \label{E2}
\ea  
where $\Vert f\Vert _{L^2}=(\int_{-1}^0 |f(x)|^2 dx)^\frac{1}{2}$. For simplicity, we denote 
$\Vert f\Vert _{H^p}=(\sum _{i=0}^p \Vert \partial _x ^i f\Vert ^2_{L^2} )^\frac{1}{2}$
for $p\in \N$.

For $p\in \{ 0, 1, 2, 3, 4\}$, we introduce the Banach spaces 
\[
X_0=L^2(-1,0), \ X_1=H^1_0(-1,0), \ X_2=\{ y\in H^2(-1,0); \ y(-1)=y(0)=\partial _x y(0)=0\}, \ X_3=D(A), 
\] 
and 
\[
X_4=\{ y\in H^4(-1,0), \ \ y(-1)=y(0)=\partial _x y(0)=\partial _x ^3 y(-1)=\partial _x ^3 y(0)=0 \}, 
\]
$X_p$ being endowed with the norm $\Vert \cdot \Vert _{H^p}$ for  $p=0, ..., 4$. 

\noindent 
{\sc Claim 1} There is a constant $C=C(a)>0$ such that 
\be
\label{E3}
\Vert y(.,t)\Vert _{H^1} \le \frac{C}{\sqrt{t}} \Vert y_0\Vert _{L^2} \quad \forall t\in (0,1]. 
\ee
To prove Claim 1, we follow closely \cite{PR}. 
Pick any $y_0\in X_3=D(A)$, we have by a classical property from semigroup theory that $y\in C([0,T], D(A))$, and that $z(.,t)=Ay(.,t)$ satisfies $z(.,t)=e^{tA} z(.,0)$. It 
follows by \eqref{E1} that  
\[
\Vert z(.,t)\Vert _{L^2} \le \Vert z(.,0)\Vert _{L^2} \quad \forall t\in (0,1]. 
\] 
Summing with \eqref{E1}, this yields
\[
\Vert y(.,t)\Vert _{L^2} + \Vert P y(.,t) \Vert _{L^2} 
\le \Vert y(.,t)\Vert _{L^2} +  \Vert P y_0 \Vert _{L^2}, \quad t\in (0,1].  
\] 
Using Lemma \ref{lem22}, this yields 
\be
\Vert y_0 \Vert _{H^3} \le C_3 \Vert y_0 \Vert _{H^3}, \quad t\in (0,1]
\ee
for some $C_3=C_3(a)>0$. Using interpolation, and noticing that $x_1=[X_0,X_3]_\frac{1}{3}$, we infer the existence of some constant $C_1=C_1(a)>0$ such that 
\be
\Vert y(.,t)\Vert _{H^1} \le C_1 \Vert y_0\Vert _{H^1} \quad \forall y_0\in X_1, \ \forall t\in (0,1]. 
\ee
This yields for $0<s<t \le 1$ $\Vert y(.,t)\Vert ^2 _{H^1} \le C_1^2  \Vert y(.,s)\Vert ^2_{H^1}$, 
which gives upon integration over $(0,t)$ for $t\in (0,1]$
\[
t\Vert y(.,t)\Vert ^2_{H^1} \le C_1^2 \int_0^t \Vert y(.,s)\Vert ^2_{H^1}ds 
\le \frac{C_1^2}{3} ((a+3)T+1) 
\Vert y_0\Vert ^2_{L^2}
\] 
where we used \eqref{E1}-\eqref{E2}. Thus \eqref{E3} holds. The proof of Claim 1 is achieved.

\noindent
{\sc Claim 2.} 
There is some constant $C>0$ such that 
\be
\label{E6}
\Vert y(.,t) \Vert _{H^{p+1}} \le \frac{C}{\sqrt{t}} \Vert y_0\Vert _{H^p}, 
\quad \textrm{ for } p\in \{ 0,1,2,3\}, \ y\in X_p, \ t\in (0,1].  
\ee
To prove Claim 2, we pick again $y_0\in D(A)$ and set $z(.,t)=Ay (., t)$. We infer from 
\eqref{E3} applied to $z (.,t)$ that 
\[
\Vert A y(.,t)\Vert _{H^1} =\Vert z(.,t)\Vert _{H^1} \le \frac{C}{\sqrt{t}} 
\Vert z(.,0)\Vert _{H^1} =\frac{C}{\sqrt{t}} \Vert A y_0\Vert . 
\] 
Combined with \eqref{E3}, this gives 
\be
\Vert y(.,t)\Vert _{H^1} + \Vert (\partial _x ^3 + a\partial _x )y(.,t)\Vert _{H^1} 
\le \frac{C}{\sqrt{t}} (\Vert y_0\Vert _{L^2} + \Vert A y_0\Vert _{L^2}).  
\label{E40}
\ee
We know from Lemma \ref{lem22} that for $z\in H^3(-1,0)$ 
\[
\Vert z\Vert _{H^3} \le C (\Vert z\Vert _{L^2} + \Vert Pz\Vert _{L^2} ). 
\]
It follows that for $z\in X_4$ ($C$ denoting a positive constant that may
 change from line to line, and that do not depend on $t$ and on $y_0$) 
\begin{eqnarray*}
\Vert z\Vert _{H^4} 
&\le& C (\Vert z\Vert _{H^3} + \Vert \partial _x ^4 z \Vert _{L^2} ) \\
&\le& C \left( \Vert z\Vert _{L^2} + \Vert P z \Vert _{L^2}  
+\Vert \partial _x (\partial _x^3 z + a\partial _x z) \Vert _{L^2} + 
a\Vert \partial _x ^2 z\Vert _{L^2} \right) \\
&\le& C (\Vert z \Vert _{H^1} + \Vert A z\Vert _{H^1} ).   
\end{eqnarray*}
Combined with \eqref{E40}, this gives 
\be
\label{E5}
\Vert y(.,t)\Vert _{H^4} \le \frac{C}{\sqrt{t}} \Vert y_0\Vert _{H^3} \quad \forall t\in (0,1]
\ee
for all $y_0\in X_4$, and also for all $y_0\in X_3=D(A)$ by density. 

Interpolating between \eqref{E3} and \eqref{E5}, we obtain \eqref{E6}. The proof of Claim 2 is achieved. 

Using Claim 2 inductively and spitting $[0,t]$ into $[0,t/3]\cup [t/3 , 2t/3 ]\cup [2t/3,t]$,  we infer that 
\ba
&&\Vert a\partial _x y (.,t)\Vert _{L^2} \le \frac{Ca}{\sqrt{t}} \Vert y_0\Vert _{L^2} \quad t \in (0,1],  \label{E7} \\
&&\Vert \partial _x ^3 y (.,t) \Vert _{L^2} \le \frac{C}{\sqrt{\frac{t}{3}}} \Vert   \partial _x^2 y(., \frac{2t}{3} )\Vert _{L^2}
\le \left( \frac{C}{\sqrt{\frac{t}{3}}} \right)^2  \Vert   \partial _x y(., \frac{t}{3} )\Vert _{L^2}
\le \left( \frac{C}{\sqrt{\frac{t}{3}}} \right)^3  \Vert    y_0\Vert _{L^2} \label{E8}
\ea
Combining \eqref{E7} and \eqref{E8}, we infer the existence of a constant $C'=C'(a)>0$ (say $C'\ge 1$, for simplicity), such that  
\be
\Vert A y(t) \Vert _{L^2} \le \frac{C'}{t^\frac{3}{2}} \Vert y_0\Vert _{L^2}, \quad \textrm{ for } y_0\in L^2(-1,-), \ t\in(0,1].  
\ee
For $y_0\in D(A^{n-1})$, $z(t)=A^{n-1}y(t)$ satisfies $z(.,t)=e^{tA} (A^{n-1} y_0)$ and thus 
\[
\Vert A^n y(.,t)\Vert _{L^2} =\Vert A z(., t)\Vert _{L^2} \frac{C'}{t^\frac{3}{2}} \Vert A^{n-1} y_0\Vert. 
\]
For $y_0\in L^2(-1,0)$ and $t\in (0,1]$, splitting $[0,t]$ into $[0,\frac{t}{n}]\cup [\frac{t}{n}, \frac{2t}{n} ] \cup \cdots \cup [ \frac{n-1}{n} t, t]$, we obtain
\be
\label{E20}
\Vert A^n y(., t)\Vert _{L^2} 
\le \frac{C'}{ ( \frac{t}{n} )^\frac{3}{2}} \Vert A^{n-1} y (\frac{n-1}{n} t)\Vert \le \cdots 
\le \left( \frac{C'}{(\frac{t}{n} )^\frac{3}{2}}  \right) ^n \Vert  y_0 \Vert = \frac{C'^n}{t^\frac{3n}{2} } n^\frac{3n}{2} \Vert y_0\Vert _{L^2}\cdot 
\ee
If $p\in \N$ is given, we pick $n\in\N$  such that $3n-3\le p\le 3n-1$. Then, by Sobolev embedding, we have that 
\begin{eqnarray*}
\Vert \partial _x ^p y(.,t) \Vert _{L^\infty}
&\le& C \Vert y(.,t)\Vert _{H^{p+1}}\\
&\le& C \Vert y(.,t)\Vert _{H^{3n}} \\
&\le&  C \left( \Vert y(.,t) \Vert _{L^2} +\Vert P y(.,t)\Vert _{L^2} + \cdots + \Vert P^n y(.,t)\Vert _{L^2}  \right) \\
&\le&   C \left( 1 + \frac{C'}{t^\frac{3}{2} } + \cdots + \frac{C'^n}{t^\frac{3n}{2}} n^\frac{3n}{2} \right) \Vert y_0\Vert _{L^2} \\
&\le& C \frac{C'^n (n+1)n^\frac{3n}{2}}{t^\frac{3n}{2}} \Vert y_0\Vert _{L^2} \cdot  
\end{eqnarray*}
Since 
\[
(n+1)\frac{ (3n) ^\frac{3n}{2}}{3^\frac{3n}{2}} \le \frac{(1+\frac{p}{3})}{3^\frac{p+1}{2}}  (p+3) ^\frac{p+3}{2} \le C'' p^\frac{3}{4} (\frac{e}{3}) ^\frac{p}{2} 
((p+1)! )^\frac{1}{2}  
\]
we see that there are some constants $C'''>0$ and $R>0$ such that 
\[
\vert \partial _x ^p \, y (x,t) \vert \le \frac{C'''}{R^p t^\frac{p+3}{2}} (p! )^\frac{1}{2}, \quad 
p\in \N, \ t\in (0,1], \ x\in [0,1].   
\]
From \eqref{E20}, we have that $y\in C((0,1],D(A^n))$ for all $n\ge 0$ and hence that 
$y\in C^ \infty ( [0,1] \times (0,1])$. Finally, for all $n\ge 0$ and $p\ge 0$, we have that 
\begin{eqnarray*}
\partial _t ^n \partial _x ^p \, y &=& (-1)^n P^n \partial _x ^p \, y \\
&=& (-1)^n (\partial _x ^3 + a\partial _x )^n \partial _x ^p \, y \\
&=& (-1) ^n \sum_{q=0}^n \left( \begin{array}{c} n \\ q   \end{array}  \right) 
a^{n-q} \partial _x ^{n+2q +p} \, y   
\end{eqnarray*}
and hence, assuming $R'<1$, 
\begin{eqnarray*}
\vert \partial _t ^n \partial _x ^p \, y(x,t) \vert 
&\le & C'' \sum_{q=0}^n \left( \begin{array}{c} n \\ q   \end{array}  \right) 
a^{n-q} \frac{ (n+2q+p) ! ^\frac{1}{2}}{ R'^{n+2q+p} t ^\frac{n+2q+p+3}{2}} \\
&\le&C'' \frac{(n+1) (2a)^n (3n+p)! ^\frac{1}{2}}{ R'^{3n+p} \, t^\frac{3n+p+3}{2}}  \\
&\le& C''  \frac{(n+1) (2a)^n    2^\frac{3n+p}{2} (3n !) ^\frac{1}{2} p! ^\frac{1}{2}}{ R'^{3n+p}\,  t^\frac{3n+p+3}{2}}  \\
&\le& \frac{K}{ t^\frac{3n+p+3}{2}} \frac{n! ^\frac{3}{2} }{ R_1^n } \frac{p! ^\frac{1}{2} }{ R_2^p}
\end{eqnarray*}
for some $K,C_1,C_2\in (0,+\infty )$ and for all $t\in (0,1]$ and all $x\in [0,1]$. 
The proof of Proposition \ref{prop2} is complete. 
\end{proof}
 It is actually expected that for $y_0\in L^2(-1,0)$ and $u\equiv 0$, we have that 
 \[
 y\in G^{\frac{1}{3}, 1} ([-1,0]\times [\varepsilon , T]) \qquad \forall\  0<\varepsilon <T<\infty. 
 \] 
Proving such a property seems to be challenging. The smoothing effect from $L^2$ to $G^{1/3}$ is much easy to establish on $\R$ for data with compact support. The proof of the following result is given in appendix. 
\begin{prop}
\label{prop20}
Let $y_0\in L^2(\R)$ be such that $y_0(x)=0$ for a.e. $x\in \R\setminus  [ -L, L]$ for some $L>0$. 
Let $y=y(x,t)$ denote the solution of the Cauchy problem 
\ba
\partial _t y + \partial _x ^3 y &=& 0, \quad t>0,\  x\in \R , \label{AN1} \\
y(x,0) &=& y_0(x), \quad x\in \R. \label{AN2}  
\ea
Then $y\in G^{\frac{1}{3}, 1} ([-l,l]\times [ \varepsilon, T])$  for all $l>0$ and all $0<\varepsilon<T$.  
\end{prop}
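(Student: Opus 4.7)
The plan is to base the proof on the explicit fundamental solution of the Airy equation and transfer the Gevrey regularity from the growth of $\mathrm{Ai}$ in $\C$ to the solution. Let $K_t(x) := (3t)^{-1/3}\, \mathrm{Ai}\bigl(x/(3t)^{1/3}\bigr)$, where $\mathrm{Ai}$ is the standard Airy function normalized so that $\widehat{\mathrm{Ai}}(\xi) = e^{i\xi^3/3}$, hence $\widehat{K_t}(\xi) = e^{it\xi^3}$. The solution of \eqref{AN1}-\eqref{AN2} is then given by the convolution
\[ y(x,t) = \frac{1}{(3t)^{1/3}} \int_{-L}^L \mathrm{Ai}\!\left(\frac{x-\xi}{(3t)^{1/3}}\right) y_0(\xi)\, d\xi, \]
and differentiating $p$ times under the integral sign yields
\[ \partial_x^p y(x,t) = \frac{1}{(3t)^{(p+1)/3}} \int_{-L}^L \mathrm{Ai}^{(p)}\!\left(\frac{x-\xi}{(3t)^{1/3}}\right) y_0(\xi)\, d\xi. \]

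The first key step is a Gevrey $1/3$ bound on derivatives of $\mathrm{Ai}$. The Airy function extends to an entire function on $\C$ of order $3/2$ and finite type: there exist $A, C_0>0$ such that $|\mathrm{Ai}(z)| \le C_0 \exp(A|z|^{3/2})$ for all $z\in \C$. For any $z_0\in \R$, the Cauchy estimate on the disk $D(z_0,r)$ gives
\[ |\mathrm{Ai}^{(p)}(z_0)| \le \frac{p!}{r^p}\sup_{|z-z_0|=r}|\mathrm{Ai}(z)| \le C_0\, \frac{p!}{r^p}\, e^{A(|z_0|+r)^{3/2}} \qquad \forall r>0. \]
Optimizing in $r$, which amounts to $r\sim p^{2/3}$, and invoking Stirling's formula yields, for every $R_0>0$, constants $R, C_1>0$ (depending on $R_0$) such that
\[ |\mathrm{Ai}^{(p)}(z_0)| \le C_1 R^p (p!)^{1/3} \qquad \forall p\in \N, \ \forall z_0 \in [-R_0, R_0]. \]

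Applying this with $R_0 := (l+L)/(3\varepsilon)^{1/3}$, an upper bound for $|(x-\xi)/(3t)^{1/3}|$ when $(x,\xi,t) \in [-l,l]\times[-L,L]\times [\varepsilon, T]$, together with $\int_{-L}^L |y_0(\xi)|\,d\xi \le \sqrt{2L}\,\Vert y_0\Vert_{L^2}$, I would obtain constants $C_2, R_2>0$ for which
\[ |\partial_x^p y(x,t)| \le C_2 R_2^p (p!)^{1/3} \qquad \forall (x,t)\in [-l,l]\times[\varepsilon, T],\ \forall p \in \N. \]
This is a Gevrey $1/3$ estimate in $x$, uniform in $t$. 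To upgrade it to joint control of $\partial_t^n\partial_x^p y$, I would use the equation itself: $\partial_t y = -\partial_x^3 y$ gives $\partial_t^n \partial_x^p y = (-1)^n \partial_x^{3n+p} y$, so substituting $3n+p$ for $p$ in the previous bound yields
\[ |\partial_t^n \partial_x^p y(x,t)| \le C_2\, R_2^{3n+p} \bigl((3n+p)!\bigr)^{1/3}. \]
The elementary inequality $(3n+p)! \le 2^{3n+p} (3n)!\, p!$ combined with the Stirling estimate $\bigl((3n)!\bigr)^{1/3} \le C_3\, 3^n\, n!$ then absorbs everything into constants $K, R_1, R_2'>0$ producing
\[ |\partial_t^n \partial_x^p y(x,t)| \le K\, \frac{n!}{R_1^n}\, \frac{(p!)^{1/3}}{R_2'^p}, \]
which is precisely the definition of $y \in G^{\frac{1}{3},1}([-l,l]\times [\varepsilon, T])$.

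The main technical point is the Cauchy-estimate computation that converts the order-$3/2$ growth of $\mathrm{Ai}$ into the Gevrey $1/3$ decay of its derivatives; everything else follows directly from the explicit convolution representation. The argument is noticeably simpler than Proposition \ref{prop2} because on the whole line one has access to the explicit heat kernel $K_t$, whereas on the bounded interval $(-1,0)$ one must argue semigroup-theoretically through iterated energy and Sobolev estimates.
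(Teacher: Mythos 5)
Your proposal is correct and follows the same overall architecture as the paper's proof: represent $y$ as the convolution of $y_0$ with the Airy kernel $E(x,t)=(3t)^{-1/3}\mathrm{Ai}\bigl(x/(3t)^{1/3}\bigr)$, establish a Gevrey-$\frac13$ bound $|\mathrm{Ai}^{(p)}|\le C R^p (p!)^{1/3}$ on compact intervals, differentiate under the integral for the $x$-derivatives, and use $\partial_t^n\partial_x^p y=(-1)^n\partial_x^{3n+p}y$ together with $(3n+p)!\le 2^{3n+p}(3n)!\,p!$ and $\bigl((3n)!\bigr)^{1/3}\le C\,3^n n!$ to get the joint $G^{\frac13,1}$ estimate. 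The only genuine difference is how the key estimate on $\mathrm{Ai}$ is obtained. The paper works on the real line: it exploits the ODE $\mathrm{Ai}''(x)=x\,\mathrm{Ai}(x)$ to derive a recursion for the Taylor coefficients at $0$, bounds them by $C\,n!^{1/3}/R^n$, and then invokes a transfer lemma (Lemma \ref{lem30}, borrowed from earlier flatness papers) to convert coefficient bounds at the origin into derivative bounds on every $[-l,l]$. You instead use the classical fact that $\mathrm{Ai}$ is entire of order $3/2$, $|\mathrm{Ai}(z)|\le C_0 e^{A|z|^{3/2}}$, and run a Cauchy estimate on $D(z_0,r)$ optimized at $r\sim p^{2/3}$; Stirling then gives exactly the $(p!)^{1/3}$ bound, uniformly for $z_0$ in a bounded interval. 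Both routes are sound; yours is shorter and avoids the auxiliary transfer lemma, at the price of taking the complex-plane growth of $\mathrm{Ai}$ as an input (it is standard and can be cited, e.g., from H\"ormander's book already referenced in the paper, or proved by steepest descent from the integral representation), whereas the paper's argument stays entirely on the real axis and is self-contained modulo its Lemma \ref{lem30}. Two cosmetic remarks: the kernel you call the ``heat kernel'' is of course the Airy (dispersive) kernel, and the uniform-in-$t$ factor $(3t)^{-(p+1)/3}\le(3\varepsilon)^{-(p+1)/3}$ should be absorbed into $C_2R_2^p$ exactly as you indicate.
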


\subsection{Proof of Theorem \ref{thm1}}
Pick any $y_0\in L^2(-1,0)$, $T>0$, and $s\in [\frac{3}{2}, 3)$. Let $\bar y$ denote the solution of the
free evolution for the KdV system:
\ba
\partial _t \bar y + \partial _x ^3 \bar y + a\partial _x \bar y =0,&&  x\in (-1,0), \ t\in (0,T), \label{K1}\\
\bar y(0,t)=\partial _x \bar y (0,t)= \bar y(-1, t)=0, && t\in (0,T), \label{K2}\\
\bar y(x,0)=y_0(x), && x\in (-1,0). \label{K3}  
\ea
It follows from Proposition \ref{prop2} that  $\bar y\in G^{\frac{1}{2}, \frac{3}{2}} ( [-1,0] \times [\varepsilon, T])$ for any $\varepsilon \in (0,T)$. In particular, $\partial _x ^2 \bar y (0,.) \in G^\frac{3}{2} ([\varepsilon, T])$ for any $\varepsilon \in (0,T)$. Pick any $\tau\in (0,T)$ and let 
\[
z(t)=\phi _s \left( \frac{t-\tau }{T-\tau} \right) \partial _x ^2 \bar y (0,t),  
\]
where $\phi _s$ is the ``step function''
\[
\phi _s (\rho ) = 
\left\{ 
\begin{array}{ll}
1 & \textrm{ if } \rho \le 0, \\
0 & \textrm{ if } \rho \ge 1, \\
\frac{ e^{-\frac{M}{(1-\rho )^\sigma } } }{ e^{-\frac{M}{\rho ^\sigma }} + e^{-\frac{M}{(1-\rho )^\sigma }}      }
&\textrm{ if } \rho \in (0,1), 
\end{array}
\right. 
\]  
with $M>0$ and $\sigma := (s-1)^{-1}$. As $\phi _s$ is Gevrey of order $s$  (see e.g. \cite{MRRschrodinger}) and $s\ge 3/2$, we infer that $z\in G^s( [\varepsilon ,T] )$ for all 
$\varepsilon \in (0,T)$. Let 
\[
y(x,t) = 
\left\{ 
\begin{array}{ll}
y_0(x) & \textrm{ if } x\in [-1,0],\ t=0,\\
\sum_{i\ge 0} g_i (x) z^{(i)} (t) & \textrm{ if } x\in [-1,0], \ t\in (0,T].    
\end{array}
\right. 
\] 
Then, by Proposition \ref{prop1}, $y\in G^{\frac{s}{3}, s} ([-1,0]\times [\varepsilon, T])$ for all $\varepsilon \in (0,T)$, and it 
satisfies   \eqref{B1}-\eqref{B3}. Furthermore, 
\[
\partial _x ^p \, y (0,t)=\partial _x ^p \, \bar y (0,t), \quad \forall t\in (0,T), \ \forall p\in \{ 0,1,2\},  
\]
so that
 \[
y(x,t) = \bar y(x,t) \quad \forall  (x,t)\in [-1,0]\times (0,\tau),
 \]
by Holmgren theorem. We infer that $y\in C([0,T], L^2(-1,0))$ and that it solves  \eqref{A1}-\eqref{A4} if
we define $u$ as in \eqref{B5}. Note that $u(t)=0$ for $0<t<\tau$ and that $u\in G^s([0,T])$. Finally 
$y(.,T)=0$, for $z^{(i)}(T)=0$ for all $i\ge 0$. The proof of Theorem \ref{thm1} is complete. \qed  
\section{Reachable states} 
\label{section3}
\subsection{The limit case $s=3$ in the flatness property}
The following result extends the flatness property depicted in Proposition \ref{prop1} to 
the limit case $s=3$. 
\begin{prop}
\label{prop3}
Assume that $z\in G^3([0,T])$ with 
\be
|z^{ (j) } (t) | \le M \frac{ (3 j) !  }{ R^{3j} } \quad \forall j\ge 0, \ \forall t\in [0,T]
\ee
where $R>1$, and let $y=y(x,t)$ be as in \eqref{B4}. 
Then $y\in G^{1,3} ([-1,0]\times [0,T])$ and it solves  \eqref{B1}-\eqref{B3}. 
\end{prop}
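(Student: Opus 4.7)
The plan is to mimic the proof of Proposition \ref{prop1}, with a sharper estimate at the step where the strict inequality $s<3$ was used. As before, for $m,n\in\N$ and $i\ge n$ one has $\partial_t^m P^n\bigl(g_i(x)z^{(i)}(t)\bigr)=(-1)^n g_{i-n}(x)\,z^{(i+m)}(t)$, and the term vanishes when $i<n$. Writing $j=i-n$ and $N=n+m$, and combining $|g_j(x)|\le 1/(3j+2)!$ on $[-1,0]$ (Lemma \ref{lem1}) with the hypothesis $|z^{(j+N)}(t)|\le M(3(j+N))!/R^{3(j+N)}$, everything reduces to estimating
\[
S_N\;:=\;\sum_{j\ge 0}\frac{(3j+3N)!}{(3j+2)!\,R^{3(j+N)}}\,.
\]

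The crucial new ingredient is the factorisation
\[
\frac{(3j+3N)!}{(3j+2)!}\;=\;\frac{(3N)!}{(3j+1)(3j+2)}\binom{3j+3N}{3j}\,,
\]
combined with the observation that $\sum_{j\ge 0}\binom{3j+3N}{3j}R^{-3j}$ is a subseries of $\sum_{k\ge 0}\binom{k+3N}{k}R^{-k}=(1-1/R)^{-(3N+1)}$. Comparing termwise (all terms are positive) and using $R>1$ to evaluate the negative-binomial series gives
\[
S_N\;\le\;\frac{MR}{2(R-1)}\cdot\frac{(3N)!}{(R-1)^{3N}}\,,
\]
and it is precisely here that the assumption $R>1$ is consumed.

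From this uniform bound I deduce, by termwise summation, that $|\partial_t^m P^n y(x,t)|\le C_R(3(n+m))!/(R-1)^{3(n+m)}$ on $[-1,0]\times[0,T]$; the identity $P^n y=(-1)^n\partial_t^n y$ from \eqref{A10} then translates this into a genuine Gevrey-$3$ control of $\partial_t^{n+m}y$. To convert it into a bound on $x$-derivatives, I apply Lemma \ref{lem10} at $p=\infty$ to $f=\partial_t^m y(\cdot,t)$: for any $p\le 3n$,
\[
|\partial_x^p\partial_t^m y(x,t)|\;\le\;K^n\sum_{j=0}^n\|\partial_t^{m+j}y(\cdot,t)\|_\infty\;\le\;K^n(n+1)\,C_R\frac{(3(n+m))!}{(R-1)^{3(n+m)}}\,.
\]
Choosing $n=\lceil p/3\rceil$ so that $p\le 3n\le p+2$, and using the elementary estimates $(a+b)!\le 2^{a+b}a!\,b!$, $(3m)!\le 27^m(m!)^3$ and $(3n)!\le (p+2)^2\,p!$, one rearranges this into the Gevrey-$(1,3)$ form $|\partial_x^p\partial_t^m y(x,t)|\le C\,p!\,R_2^{-p}(m!)^3 R_1^{-m}$ for explicit constants $R_1,R_2>0$ depending only on $R,a,M$.

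Finally, specialising to $m=n=0$ yields uniform convergence of the series defining $y$ on $[-1,0]\times[0,T]$, while the bounds above justify termwise differentiation to any order; hence $y\in C^\infty([-1,0]\times[0,T])$, and the verification that $y$ satisfies \eqref{B1}-\eqref{B3} proceeds exactly as in Proposition \ref{prop1} from $Pg_i=-g_{i-1}$, $Pg_0=0$ and the initial data $g_i(0)=g_i'(0)=0$, $g_0''(0)=1$, $g_i''(0)=0$ for $i\ge 1$. The main obstacle is the combinatorial estimate for $S_N$: once the generating-function identity is spotted, the rest of the argument is essentially bookkeeping parallel to that of Proposition \ref{prop1}.
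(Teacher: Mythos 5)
Your proof is correct, and while it keeps the paper's overall scheme (the series ansatz \eqref{B4}, the bound $|g_j|\le 1/(3j+2)!$ from Lemma~\ref{lem1}, the reduction of $x$-derivatives to powers of $P$ via Lemma~\ref{lem10}, and the final bookkeeping with $n=\lceil p/3\rceil$), the central estimate is handled by a genuinely different and arguably cleaner device. The paper bounds the sum $S$ by splitting the index range into blocks of length $N$, invoking the estimate $\sum_{k\ge0}\bigl(\tfrac{k+2}{R^{k+1}}\bigr)^N\le \tfrac{a^N}{R^{N\sigma}-1}$ imported from \cite{MRRreachable}, and then Stirling's formula; this also produces the extra decay $(n+1)^{-3/2}$ which the paper uses when summing over $0\le j\le n$ in the Lemma~\ref{lem10} step. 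You instead use the exact factorisation $\tfrac{(3j+3N)!}{(3j+2)!}=\tfrac{(3N)!}{(3j+1)(3j+2)}\binom{3j+3N}{3j}$ and dominate the result by the negative binomial series $\sum_{k\ge0}\binom{k+3N}{k}R^{-k}=(1-1/R)^{-(3N+1)}$, which is exactly where $R>1$ is used; this is self-contained (no external lemma, no Stirling at this stage), treats all $N$ uniformly (the paper handles small $N$ separately), and the price --- losing the $(n+1)^{-3/2}$ factor, so that the sum over $j\le n$ costs a factor $n+1$ --- is harmless, since polynomial factors are absorbed into the geometric radii as you do. Your endgame with $(a+b)!\le 2^{a+b}a!\,b!$, $(3m)!\le 27^m(m!)^3$ and $(3n)!\le (p+2)^2\,p!$ matches the paper's conversion to the $G^{1,3}$ bound.

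Two minor remarks, neither a gap. First, to write $|\partial_t^mP^n y|\le\cdots$ and to apply Lemma~\ref{lem10} to $f=\partial_t^m y(\cdot,t)$ you implicitly use termwise differentiation of the series; as in the paper, the clean order is to run your estimate on $\sum_{i\ge n}\Vert \partial_t^m(g_iz^{(i)})\Vert_{3n,\infty}$ (i.e.\ on partial sums), which yields uniform convergence of all derivative series and then justifies the interchange --- you acknowledge this at the end, so it is only a matter of presentation. Second, your displayed $S_N$ omits the constant $M$ that reappears in its bound; a trivial slip.
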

\begin{proof}
We follow closely \cite{MRRreachable}. 
Pick any $m,n\in \N$. By \eqref{Q1}, we can assume that $i\ge n$. Setting $j=3i-3n$ and $N=3n+3m$, so that $j+N=3i+3m$, we have that 
\[
\left\vert 
\partial _t ^m P^n \big(  g_i(x) z^{ (i) } (t) \big)
\right\vert 
\le M \frac{(3i+3m)!}{R^{3i+3m}} \frac{1}{(3(i-n)+2)!} \le M \frac{(j+N)!}{R^{j+N}} \frac{1}{(j+2)!} \cdot
\]
Let 
\[
S:=\sum_{i\ge n} |\partial _t ^m P^n (g_i(x)z^{(i)} (t)) | .
\]
If $N\le 2$, then $S\le M\sum_{j\ge 0} R^{-(j+N)} <\infty$ for $R>1$. Assume from now on that $N\ge 2$. Then 
\begin{eqnarray*}
S 
&\le& M\, \sum_{j\ge 0} \frac{(j+3)\cdots (j+N)}{R^{j+N}}\\
&\le& M\,  \sum_{k\ge 0} \ \ \sum_{kN\le j < (k+1)N} \frac{(j+3)\cdots (j+N)}{R^{j+N}}\\ 
&\le& M\,  \sum_{k\ge 0} N \frac{\big( (k+2)N\big) ^{N-2}}{R^{(k+1)N}} \\
&\le& MN^{N-1} \sum_{k\ge 0} \left( \frac{k+2}{R^{k+1}}\right) ^N\cdot 
\end{eqnarray*} 
Pick any $\sigma\in (0,1)$ and let $a:=\sup_{k\ge 0} \frac{k+2}{(R^{1-\sigma})^{k+1}} <\infty$. We infer from \cite[Proof of Proposition 3.1]{MRRreachable}
that 
\[
\sum_{k\ge 0} \left( \frac{k+2}{R^{k+1}}\right) ^N \le \frac{a^N}{R^{N\sigma}-1} ,
\]
so that 
\[
S\le MN^{N-1} \frac{a^N}{R^{N\sigma}-1} \le M' \left( \frac{ae}{R^\sigma }\right) ^N \frac{N!}{N^\frac{3}{2}}
\]
for some constant $M'>0$, by using Stirling formula. Next, we have that 
\[
N!= (3n+3m)!\le 2^{3n+3m} (3n)! (3m)!
\] 
and using again the estimate
\[
\frac{(3m)!}{m!^3} \sim 3^{3m} \frac{\sqrt{3}}{2\pi m} ,
\]
we infer  that 
\begin{eqnarray*}
S
&\le& M'' \left( \frac{ae}{R^\sigma} \right) ^{3m+3n} 2^{3n+3m} (3n)! \left(  \frac{3^{3m}}{m+1}(m!)^3\right)  \frac{1}{ (m+n+1)^\frac{3}{2} } \\
&\le& M''' \frac{ (3n)! }{R_2^{3n}} \frac{ (m!)^3}{R_1^m} \frac{1}{ (n+1)^\frac{3}{2}}   
\end{eqnarray*} 
for some positive constants $M'',M''',R_1$  and $R_2$. There is no loss of generality in assuming that $R_2<1$. 
Let $K$ be as in Lemma \ref{lem10} for $p=\infty$. Then we have
\begin{eqnarray*}
\sum_{i\ge n} \Vert \partial _t ^m ( g_i(x) z^{( i) } (t) )\Vert _{3n,\infty} 
&\le & K^n \sum_{i\ge n} \ \sum_{0\le j\le n} \Vert \partial _t ^m P^j (g_i(x)z^{(i)} (t))\Vert _\infty \\
&\le& M''' K^n \sum_{0\le j\le n} \left( \frac{(m!)^3}{R_1^m} \frac{(3j)!}{R_2^{3j}} \frac{1}{(j+1)^\frac{3}{2}} \right) \\
&\le& M''' K^n  \frac{(3n)!}{R_2^{3n}} \frac{(m!)^3}{R_1^m} \sum_{j\ge 0} \frac{1}{(j+1)^\frac{3}{2}} \cdot 
\end{eqnarray*}
This shows that the series of derivatives $\partial _t ^m\partial _x ^l\big( g_i (x)z^{ (i) } (t) \big)$ is uniformly convergent 
on $[-1,0]\times [0,T]$ for all $m,l\in \N$, so that $y\in C^\infty ([-1,0]\times [0,T])$, and that the function $y$  satisfies for $l\le 3n$ 
\[
|\partial _t ^m \partial _x ^l \, y (x,t)  |\le M''''   K^n  \frac{(3n)!}{R_2^{3n}} \frac{(m!)^3}{R_1^m} \quad \forall x\in [-1,0], \ \forall t\in [0,T]. 
\] 
for some constant $M''''>0$. Finally, if $l\in \{ 3n-2,3n-1,3n\}$, then $(3n)! (K/R_2^3)^n \le C'l!/R_2'^l$ for some $C'>0$, $R_2'>0$. This yields 
\[
\vert \partial _t^m\partial _x ^l \, y(x,t) \vert \le C'M'''' \frac{(m!)^3}{R_1^m} \frac{l!}{R_2'^l}, \quad \forall x\in [-1,0], \  \forall t\in [0,T]. 
\]
\end{proof}
\subsection{Proof of Theorem \ref{thm2}} 
Pick any $R>R_0=e^{(3e)^{-1}}(1+a)^\frac{1}{3}$ and any $y_1\in {\mathcal R}_R$. Our first task is to write $y_1$ in the form 
\be
\label{R1}
y_1(x)=\sum_{i\ge 0} b_i g_i(x), \quad x\in [-1,0]. 
\ee
Note that if \eqref{R1} holds with a convergence in $W^{n,\infty}(-1,0)$ for all $n\ge 0$, then
\[
\partial _x ^2 P^n y_1 (0)=\sum_{i\ge 0} b_i \partial _x ^2 P^n g_i(0) = \sum_{i\ge n} b_i \partial _x ^2 g_{i-n}(0) =b_n. 
\]  
Set 
\[
b_n:=\partial _x ^2 P^n y_1(0), \quad \forall n\ge 0. 
\]
Since $y_1\in {\mathcal R}_R$ with $R>R_0$, there exists for any $r\in (R_0,R)$ a constant $C=C(r)>0$ such that 
\[
\vert \partial _x ^n \, y_1(x)\vert \le C\frac{n!}{r^n}, \quad \forall x\in [-r,0], \ \forall n\in \N. 
\]
Using Lemma \ref{lem10}, we infer that 
\[
\vert b_n\vert =\vert P^n \partial _x ^2 y_1 (0) \vert \le (1+\frac{1}{a}) (1+a)^n \Vert \partial _x^2 y_1\Vert _{3n,\infty} 
\le C' (1+a)^n \frac{(3n+2)!}{r^{3n+2}}  
\]
for some $C'>0$ and all $n\ge 0$.
We need the following version of Borel Theorem, which is a particular case of \cite[Proposition 3.6]{MRRreachable} 
(with $a_p=[3p(3p-1)(3p-2)]^{-1}$ for $p\ge 1$).
\begin{prop}
\label{prop4}
Let $(d_q)_{q\ge 0}$ be a sequence of real numbers such that 
\[
|d_q|\le C H^q (3q)! \quad \forall q\ge 0
\]
for some $H>0$ and $C>0$. Then for all $\tilde H> e^{e^{-1}} H$, there exists a function $f\in C^\infty (\R )$ 
such that 
\begin{eqnarray*}
f^{(q)} (0)&=& d_q \quad \forall q\ge 0, \\
|f^{(q)} (x) | &\le& C \tilde H ^q (3q)! \quad \forall q\ge 0, \ \forall x\in \R .
\end{eqnarray*}
\end{prop}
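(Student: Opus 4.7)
The plan is to adapt the classical Borel theorem to the Gevrey class $G^3$ via the standard ``summation-with-cutoffs'' ansatz. Fix once and for all a cutoff $\chi\in C^\infty(\R)$ that is Gevrey of order $3$, identically equal to $1$ on a neighbourhood of $0$, and supported in $[-1,1]$; such a $\chi$ can be built by gluing step functions of the type $\phi_s$ used in the proof of Theorem \ref{thm1} (with $s=3$, i.e.\ $\sigma=1/2$), so that one has a uniform bound $|\chi^{(j)}(y)|\le K L^j (j!)^3$. Then set
\[
f(x):=\sum_{p\ge 0}\frac{d_p}{p!}\,x^p\,\chi(\lambda_p x),
\]
where the dilations $\lambda_p>0$ are free parameters, to be tuned in terms of $H$ and $\tilde H$ at the end of the argument.

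The interpolation property $f^{(q)}(0)=d_q$ comes almost for free: because $\chi\equiv 1$ near $0$, one has $\chi^{(k)}(0)=0$ for every $k\ge 1$, so a Leibniz expansion of $\partial_x^q\bigl[x^p\chi(\lambda_p x)\bigr]$ at $x=0$ kills every term except the one coming from $p=q$, which contributes exactly $d_q$. Term-by-term differentiation is then legitimate a posteriori via the Weierstrass M-test, once the uniform bound on $f^{(q)}$ is in place.

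The crux is to prove $|f^{(q)}(x)|\le C\tilde H^q(3q)!$ uniformly in $x\in\R$. Since $\chi$ is supported in $[-1,1]$, only the indices $p$ with $|x|\le\lambda_p^{-1}$ contribute, and in each Leibniz term one may replace $x^{p-k}$ by $\lambda_p^{-(p-k)}$. Combining the Gevrey-$3$ estimate on $\chi$ with $|d_p|\le CH^p(3p)!$ reduces the task to controlling
\[
\sum_{p\ge 0}\sum_{k=0}^{\min(p,q)} C H^p\,\frac{(3p)!}{(p-k)!}\binom{q}{k}\lambda_p^{q-p}\,K L^{q-k}\bigl((q-k)!\bigr)^3
\]
by $C\tilde H^q(3q)!$. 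The natural scaling for the $G^3$ setting is $\lambda_p=\alpha\,p^3$ with a numerical constant $\alpha>0$ to be optimised: decreasing $\alpha$ enlarges the tail $p>q$ but shrinks the individual weights $\lambda_p^{q-p}$. After rewriting $(3p)!/(p-k)!$ and $((q-k)!)^3$ as $(3q)!$ times explicit ratios by Stirling's formula, one is left with a geometric-type series in $p-q$ whose common ratio is a function of $\alpha$ and $H$; minimising over $\alpha>0$ produces the extremal numerical value $e^{e^{-1}}$, which is exactly the constraint $\tilde H>e^{e^{-1}}H$.

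I expect the only genuine obstacle to be this final optimisation step, i.e.\ the bookkeeping needed to pin down the sharp multiplicative constant $e^{e^{-1}}$: all other pieces (choice of cut-off, Leibniz expansion, Stirling, M-test) are routine. This is precisely the content of \cite[Proposition 3.6]{MRRreachable}, to which the present statement reduces after specialising the auxiliary sequence to $a_p=[3p(3p-1)(3p-2)]^{-1}$ as announced.
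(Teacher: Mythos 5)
You should first note what the paper actually does here: Proposition \ref{prop4} is not proved in the paper at all, but invoked as a particular case of \cite[Proposition 3.6]{MRRreachable} with $a_p=[3p(3p-1)(3p-2)]^{-1}$. So your closing sentence (the reduction to that result) coincides with the paper's "proof"; had you stopped there, the two would match. The problem is the standalone argument you sketch in between, which both leaves the decisive step undone and, as formulated, cannot work.

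Concretely: take a fixed compactly supported cutoff $\chi$ with $|\chi^{(j)}(y)|\le K L^j (j!)^3$, and consider a single summand $h_p(x)=\frac{d_p}{p!}\,x^p\chi(\lambda_p x)$ with $|d_p|=CH^p(3p)!$. Differentiate it $q=p$ times and look at the Leibniz term in which all $p$ derivatives fall on the cutoff, namely $\frac{d_p}{p!}x^p\lambda_p^{\,p}\chi^{(p)}(\lambda_p x)$. On the annulus where $\chi^{(p)}\neq 0$ one has $|x|\sim\lambda_p^{-1}$, so this term has size about $\frac{|d_p|}{p!}K L^p (p!)^3 = CK(HL)^p(3p)!\,(p!)^2$: the dilation $\lambda_p$ cancels out. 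This exceeds the required bound $C\tilde H^{\,p}(3p)!$ by the factor $K(p!)^2(HL/\tilde H)^p$, for every choice of $\lambda_p$ — in particular for $\lambda_p=\alpha p^3$, whatever $\alpha$. The obstruction is structural: the derivatives of a Gevrey-$3$ cutoff are themselves of size $(j!)^3\approx (3j)!/27^j$, which already exhausts the entire allowance $(3q)!$, leaving only a geometric factor to absorb $|d_p|/p!\approx H^p(p!)^2 27^p$; replacing $\chi$ by a fixed Gevrey-$s$ cutoff with $s$ close to $1$ only softens the divergence to $(p!)^{s-1}$. So no optimisation over $\alpha$ can yield the stated estimate, let alone the sharp threshold, and the claim that "minimising over $\alpha$ produces $e^{e^{-1}}$" — which you yourself flag as the one remaining obstacle — is in fact the whole content of the proposition, not routine bookkeeping. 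A genuine proof needs order-dependent cutoffs (e.g.\ iterated mollifications whose number is tied to the order of differentiation) or a different construction altogether, which is exactly what \cite[Proposition 3.6]{MRRreachable} provides. Note finally that the conclusion requires the \emph{same} constant $C$ in front (at $q=0$ nothing can be absorbed into $\tilde H^{\,q}$), a point your sketch — with its extra constants $K$, $L$ and Leibniz sums — never addresses.
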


Since $r>R_0$, we can pick two numbers $H\in (0,e^{-e^{-1}})$ and $C''>0$ such that 
\[
|b_n|\le C'' H^n (3n)! \quad \forall n\ge 0. 
\]
By Proposition \ref{prop4}, there exists a function $f\in G^3([0,T])$  and a number $R>1$ such that 
\ba
f^{(i)} (T) &=& b_i \quad \forall i\ge 0, \label{V1}\\
\vert f^{( i ) } (t) \vert &\le& C'' \frac{(3i)!}{R^{3i}}\quad  \forall i\ge 0, \ \forall t\in [0,T].  \label{V2}
\ea
Pick any $\tau \in (0,T)$ and let 
\[
g(t)=1 -\phi _2 \left( \frac{t- \tau }{T- \tau}\right) \quad \textrm{ for } t\in [0,T].  
\]
Note that $g\in G^2([0,T])$ and that $g(T)=1$, $g^{( i) }(T)=0$ for all $i\ge 1$. Setting 
\[
z(t)=g(t)f(t) \quad \forall t\in [0,T], 
\] 
we have that $z\in G^3([0,T])$ and that 
\ba
z^{( i) } (T) &=& b_i \quad \forall i\ge 0, \label{M1}\\
z^{( i) } (0) &=& 0 \quad \forall i\ge 0, \label{M2}\\
| z^{(i)} (t)| &\le& C''' \frac{(3i)!}{R^{3i}} \quad \forall i\ge 0, \ \forall t\in [0,T] \label{M3}
\ea
for some $C'''>0$. (The fact that the constant $R>0$ in \eqref{M3} is the same as in \eqref{V2}  is proved as in 
\cite[Lemma 3.7]{MRRreachable}.)
Let $y$ be as in \eqref{B4}. Then by Proposition \ref{prop3} we know that $y\in G^{1,3} ([-1,0]\times [0,T])$ and that it solves
\eqref{B1}-\eqref{B3}.   Let $u(t)=y(-1,t)$ for $t\in [0,T]$. Then $u\in G^3([0,T])$ and $y$ solves \eqref{A1}-\eqref{A4} 
with $y_0=0$, by \eqref{M2}. Furthermore, we have by \eqref{M1} that 
\[
y(x,T)=\sum_{i\ge 0} g_i(x) z^{(i)} (T) = \sum_{i\ge 0} b_i g_i(x), \quad x\in [-1,0].  
\] 
From the proof of Proposition \ref{prop3}, we know that for all $l,m\in \N$, the sequence of partial sums of the series 
$\sum_{i\ge 0} \partial _t^m \partial _x ^l \big( g_i(x) z^{(i)}(t) \big)$ converges uniformly on $[-1,0]\times [0,T]$ to 
$\partial _t^m \partial _x ^l \,  y$, and hence for all $n\ge 0$
\begin{eqnarray*}
&&P^n y(0,T)= \sum_{i\ge 0} b_i P^n g_i (0) = \sum_{i\ge n} b_i g_{i-n }(0)=0, \\
&&\partial _x P^n y(0,T)= \sum_{i\ge 0} b_i \partial _x P^n g_i (0) = \sum_{i\ge n} b_i \partial _x g_{i-n }(0)=0, \\
&&\partial _x ^2 P^n y(0,T)= \sum_{i\ge 0} b_i \partial _x ^2 P^n g_i (0) = \sum_{i\ge n} b_i \partial _x ^2 g_{i-n }(0)=b_n=\partial _x ^2 P^n y_1(0). 
\end{eqnarray*}
To conclude that 
\[
y(x,T)=y_1(x), \quad \forall x\in [-1,0],
\]
it is sufficient to prove the following\\[3mm] 
{\sc Claim 3.} If $h\in G^1([-1,0])$ is such that $P^n h(0)=\partial _x P^n h( 0)=\partial _x ^2 P^n h(0)=0$ for all $n\in \N$, then 
$h\equiv 0$. \\[3mm] 
Indeed, we notice that $P^0=id$ and that $P^n=\partial _x ^{3n} + \cdots$, 
$\partial _x P^n=\partial _x^{3n+1} + \cdots$, and 
$\partial _x^2P^n = \partial _x ^{3n+2} + \cdots$, where $\cdots$ stands for less order derivatives. Then we obtain by induction
that $\partial _x^{3n}h(0)=\partial _x^{3n+1}h(0) =\partial _x^{3n+2}h(0)=0$ for all $n\ge 0$, so that $h\equiv 0$. 
This completes the proof of Claim 3 and of Theorem \ref{thm2}. \qed
\section*{Appendix}
\subsection{Proof of Lemma \ref{lem10}}
We first need to prove two simple lemmas. We still use the notation $P=\partial _x ^3 + a\partial _x$. 
\begin{lem}
\label{lem21}
Let $a\in \R _+$ and $p\in [1,\infty ] $. Then for all $n\in \N$, we have 
\be
\label{P3}
\Vert P^n f\Vert _p\le (1+a)^n \Vert f\Vert _{3n,p}\quad \forall f\in W^{3n,p} (-1,0).
\ee
\end{lem}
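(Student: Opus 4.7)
The plan is to expand $P^n$ using the binomial theorem and then control each resulting pure derivative by the full Sobolev norm $\|f\|_{3n,p}$. Since the operators $\partial_x^3$ and $a\partial_x$ commute, we have
\[
P^n = (\partial_x^3 + a\partial_x)^n = \sum_{k=0}^n \binom{n}{k} a^{n-k}\, \partial_x^{3k + (n-k)} = \sum_{k=0}^n \binom{n}{k} a^{n-k}\, \partial_x^{2k+n}.
\]

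Applying this to $f$, taking $L^p$ norms, and using the triangle inequality, I would then observe that for every $k \in \{0,\dots,n\}$ the order of the derivative satisfies $2k+n \le 3n$, hence $\|\partial_x^{2k+n} f\|_p \le \|f\|_{3n,p}$. Pulling this uniform bound out of the sum and using the binomial identity $\sum_{k=0}^n \binom{n}{k} a^{n-k} = (1+a)^n$ yields
\[
\|P^n f\|_p \le \sum_{k=0}^n \binom{n}{k} a^{n-k} \|\partial_x^{2k+n} f\|_p \le (1+a)^n \|f\|_{3n,p},
\]
which is exactly \eqref{P3}.

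There is essentially no obstacle here: the argument is a one-line binomial expansion plus a trivial comparison of derivative orders. The only thing to be slightly careful about is the commutativity of the two summands in $P$, which is immediate because both are constant-coefficient differential operators in $x$, and the fact that the bound must be uniform in $p\in[1,\infty]$, which causes no problem since only the triangle inequality and monotonicity of the $\|\cdot\|_{3n,p}$ seminorm are used. The result is of course stated for $f \in W^{3n,p}(-1,0)$ so that all terms $\partial_x^{2k+n}f$ with $2k+n \le 3n$ lie in $L^p(-1,0)$ and the computation is justified.
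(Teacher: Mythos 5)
Your argument is correct: the binomial expansion of $P^n=(\partial_x^3+a\partial_x)^n$, the triangle inequality, the bound $\Vert \partial_x^{2k+n} f\Vert_p\le \Vert f\Vert_{3n,p}$ for $2k+n\le 3n$, and the identity $\sum_{k=0}^n\binom{n}{k}a^{n-k}=(1+a)^n$ give exactly \eqref{P3}. The paper obtains the same estimate by induction on $n$, peeling off one factor of $P$ at a time via $\Vert Pf\Vert_{3n-3,p}\le \Vert f'''\Vert_{3n-3,p}+a\Vert f'\Vert_{3n-3,p}\le (1+a)\Vert f\Vert_{3n,p}$, so your direct expansion is essentially the unrolled version of the same argument.
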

\begin{proof}
The proof is by induction on $n$. For $n=0$, the result is obvious. If it is true at rank $n-1$, then 
\[
\Vert P^{n-1} (Pf) \Vert _p \le (1+a)^{n-1} \Vert Pf \Vert _{3n-3,p}  
\le (1+a)^{n-1} \left( \Vert f'''\Vert _{3n-3,p} +  a \Vert f'\Vert _{3n-3,p} \right) \le (1+a) ^n \Vert f\Vert _{3n,p}.    
\] 
\end{proof}
\begin{lem}
\label{lem22}
Let $a\in \R _+$ and $p\in [1,\infty ]$. Then there exists a constant $C_1=C_1(p,a)>0$ such that 
\be
\label{PP0}
\Vert f\Vert _{3,p} \le C_1 (\Vert f\Vert _p + \Vert P\, f\Vert _p)\quad \forall f\in W^{3,p}(-1,0). 
\ee 
\end{lem}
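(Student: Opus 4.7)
Only the non-trivial direction is asked, namely that $\Vert f\Vert _{3,p}$ is bounded by $\Vert f\Vert _p+\Vert Pf\Vert _p$ (the reverse follows as in Lemma \ref{lem21} with $n=1$). Since $f'''=Pf-af'$, the essential content of \eqref{PP0} is that no information is lost in passing from $\Vert f'''\Vert _p$ to $\Vert Pf\Vert _p$, up to lower-order terms. My strategy is to bootstrap from a standard interpolation inequality on the bounded interval $(-1,0)$, absorb the resulting $a\Vert f'\Vert _p$ contribution into the left-hand side, and finally recover control of the intermediate derivatives.

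The ingredient I would use is the classical Gagliardo-Nirenberg/Ehrling interpolation inequality: for every $p\in [1,\infty ]$ and every $\varepsilon >0$ there exists a constant $C_\varepsilon =C_\varepsilon (p)>0$ such that
\[
\Vert f'\Vert _p+\Vert f''\Vert _p\le \varepsilon \Vert f'''\Vert _p+C_\varepsilon \Vert f\Vert _p
\qquad\forall f\in W^{3,p}(-1,0).
\]
Combining this with the identity $f'''=Pf-af'$, which yields $\Vert f'''\Vert _p\le \Vert Pf\Vert _p+a\Vert f'\Vert _p$, and choosing $\varepsilon =(2a+1)^{-1}$ absorbs the $a\varepsilon \Vert f'''\Vert _p$ term on the right into the left side, giving $\Vert f'''\Vert _p\le 2\Vert Pf\Vert _p+C_a\Vert f\Vert _p$ for some $C_a=C_a(p)>0$. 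Re-injecting this estimate into the interpolation inequality then controls $\Vert f'\Vert _p$ and $\Vert f''\Vert _p$ by $\Vert Pf\Vert _p+\Vert f\Vert _p$, and adding the four contributions $\Vert f\Vert _p$, $\Vert f'\Vert _p$, $\Vert f''\Vert _p$, $\Vert f'''\Vert _p$ produces \eqref{PP0}. The case $a=0$ is immediate since then $\Vert Pf\Vert _p=\Vert f'''\Vert _p$ and one applies the interpolation directly.

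\textbf{Main obstacle.} The only step that is not routine algebra is the interpolation inequality itself. For $p\in [1,\infty )$ it is a standard Sobolev interpolation on a bounded interval; the case $p=\infty$ requires a little extra care and can be obtained from the Landau-Kolmogorov type estimate via Taylor's formula on short subintervals. An alternative, equally short proof that avoids interpolation is a compactness-contradiction argument: if \eqref{PP0} failed one would obtain a sequence $(f_n)$ with $\Vert f_n\Vert _{3,p}=1$ and $\Vert f_n\Vert _p+\Vert Pf_n\Vert _p\to 0$; the compact embedding $W^{3,p}(-1,0)\hookrightarrow W^{2,p}(-1,0)$ gives a $W^{2,p}$-limit $f$, and the identity $f_n'''=Pf_n-af_n'$ upgrades convergence to $W^{3,p}$, so that $\Vert f_n\Vert _{3,p}\to \Vert f\Vert _{3,p}=1$ while $\Vert f\Vert _p=0$ forces $f\equiv 0$, a contradiction. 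Either route is short enough to be included in the appendix.
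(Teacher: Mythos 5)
Your argument is correct, but it is genuinely different from the one in the paper. The paper proves \eqref{PP0} by a soft functional-analytic argument: it checks that $\tres f\tres :=\Vert f\Vert _p+\Vert Pf\Vert _p$ is a norm making $W^{3,p}(-1,0)$ complete (a Cauchy sequence for $\tres\cdot\tres$ has $f_n\to f$ and $Pf_n\to g$ in $L^p$, and a distributional bootstrap shows $f\in W^{3,p}$ with $Pf=g$), and then applies Banach's bounded inverse theorem to the identity map from $(W^{3,p},\Vert\cdot\Vert _{3,p})$ onto $(W^{3,p},\tres\cdot\tres)$; the estimate \eqref{PP0} is exactly the continuity of the inverse. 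Your first route replaces this by the quantitative Ehrling/Gagliardo--Nirenberg interpolation $\Vert f'\Vert _p+\Vert f''\Vert _p\le \varepsilon\Vert f'''\Vert _p+C_\varepsilon\Vert f\Vert _p$ together with the absorption coming from $f'''=Pf-af'$ and the choice $\varepsilon=(2a+1)^{-1}$; this is sound (the absorption works since $a\varepsilon<\tfrac12$), and it buys an explicit, traceable constant, in particular a transparent dependence of $C_1$ on $a$ (roughly affine), which the paper's open-mapping argument cannot provide; its only cost is that one must justify the interpolation inequality uniformly for $p\in[1,\infty]$, including the endpoint $p=\infty$, which you correctly flag. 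Your second route (compactness--contradiction via $W^{3,p}\hookrightarrow W^{2,p}$ compact and the identity $f_n'''=Pf_n-af_n'$) is also valid for all $p\in[1,\infty]$ and is comparable in softness to the paper's proof, trading completeness plus the bounded inverse theorem for Rellich--Kondrachov (or Arzel\`a--Ascoli when $p=\infty$); like the paper's argument it is non-constructive. Either of your versions could replace the appendix proof without affecting the rest of the paper, since Lemma \ref{lem10} only uses the existence of some constant $C_1(p,a)$.
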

\begin{proof}
The seminorm 
\[
||| f ||| :=\Vert f\Vert _p + \Vert P f \Vert _p 
\]
is clearly a norm in $W^{3,p}(-1,0)$. Let us check that $W^{3,p}(-1,0)$, endowed with the norm $||| \cdot |||$, is a Banach space. Pick any Cauchy sequence 
$(f_n)_{n\ge 0}$ for $|||\cdot |||$ . Then 
\[
||| f_m - f_n ||| =\Vert f_m-f_n\Vert _p + \Vert P f_m-P f_n\Vert _p \to 0, \quad \textrm{ as } m,n\to +\infty.  
\]
Since $L^p(-1,0)$ is a Banach space, there exist $f,g\in L^p(-1,0)$ such that 
\be
\label{P0}
\Vert f_n-f\Vert _p + \Vert P f_n - g \Vert _p\to 0, \quad \textrm{ as } n\to \infty. 
\ee
Since $f_n\to f$ in ${\mathcal D'} (-1,0)$, $Pf_n\to Pf $ in ${\mathcal D'}(-1,0)$ as well, and $Pf=g$. Thus $f''=\int g(x)dx - af\in L^p(-1,0)$, and 
$f\in W^{2,p}(-1,0)$. This yields $f'\in W^{1,p}(-1,0)$ and $f'''=g- a f'\in L^p(-1,0)$, and hence $f\in W^{3,p}(-1,0)$. Note that \eqref{P0} can be written 
$||| f_n-f|||\to 0$. This proves that $(W^{3,p}(-1,0) , ||| \cdot |||)$ is a Banach space. 
Now, applying the Banach theorem to the identity map from the Banach space $(W^{3,p}(-1,0), \Vert \cdot \Vert _{3,p})$ to the Banach space 
 $(W^{3,p}(-1,0), ||| \cdot |||)$, which is linear, continuous, and bijective, we infer that its inverse is continuous; that is,  \eqref{PP0} holds.    
\end{proof}
Let us prove Lemma \ref{lem10}. We proceed by induction on $n$. Both inequalities in \eqref{P1} are obvious for $n=0$. Assume now that both
inequalities in \eqref{P1} are satisfied up to the rank $n-1$. Let us first prove the left inequality in \eqref{P1} at the rank $n$. 
Pick any $f\in W^{3n,p} ( -1,0)$. Then, by the induction hypothesis and Lemma  \ref{lem21},  we obtain
\begin{eqnarray*}
\sum_{i=0}^n \Vert P^i f\Vert _p &=& \sum_{i=0}^{n-1} \Vert P^i f\Vert _p + \Vert P^n f\Vert _p \\
&\le& (1+\frac{1}{a}) (1+a)^{n-1} \Vert f\Vert _{3n-3,p} +(1+a)^n \Vert f\Vert _{3n,p} \\
&\le& (1+\frac{1}{a}) (1+a)^n \Vert f\Vert _{3n,p},
\end{eqnarray*}
as desired. 

For the right inequality in \eqref{P1}, we write 
\ba
\Vert f\Vert _{3n,p} &=& \Vert f\Vert _{3n-3,p} 
+ \Vert \partial _x ^{3n-2} f\Vert _p +  \Vert \partial _x ^{3n-1} f\Vert _p +  \Vert \partial _x ^{3n} f\Vert _p \nonumber \\
&\le& K^{n-1} \sum_{i=0}^{n-1} \Vert P^i f\Vert _p + \Vert \partial _x ^{3n-3} f \Vert _{3,p}. \label{P4}  
\ea
But it follows from Lemma \ref{lem22} that 
\begin{eqnarray*}
\Vert \partial _x ^{3n-3} f \Vert _{3,p} 
&\le &  C_1 (\Vert \partial _x ^{3n-3} f\Vert _p + \Vert P \partial _x ^{3n-3} f\Vert _p ) \\
&\le &  C_1 (\Vert \partial _x ^{3n-3} f\Vert _p + \Vert \partial _x ^{3n-3} P \,  f\Vert _p ) \\
&\le &  C_1 (\Vert f\Vert _{3n-3,p} + \Vert P f \Vert _{3n-3,p} ) \\
&\le &  C_1 K^{n-1} \left( \sum _{i=0}^{n-1} \Vert P^i f\Vert _p + \sum_{i=1}^n \Vert P^i f\Vert _p\right) \\
&\le& 2C_1K^{n-1}      \sum _{i=0}^{n} \Vert P^i f\Vert _p. 
\end{eqnarray*}
Combined with \eqref{P4}, this yields
\[
\Vert f\Vert _{3n,p} \le (1+ 2C_1)K^{n-1}  \sum _{i=0}^{n} \Vert P^i f\Vert _p.
\]
It is sufficient to pick $K:=1+2C_1$.\qed

\subsection{Proof of Proposition \ref{prop20}}
Let $\textrm{Ai}$ denote the Airy function defined as the inverse Fourier transform of $\xi\to 
\exp(i\xi ^3/3)$. Then it is well known (see e.g. \cite{hormander})   that $\textrm{Ai}$ is an entire 
(i.e. complex analytic on $\C$) function  satisfying 
\be
\label{AN3}
\textrm{Ai} ''(x)=x\textrm{Ai} (x), \quad \forall x\in \C. 
\ee 
To prove that $y\in G^{\frac{1}{3}, 1} ( [-l,l] \times [\varepsilon , T])$, we need first check that the Airy function is itself Gevrey of order $1/3$. \\
{\sc Claim 4.}  $\textrm{Ai}\in G^\frac{1}{3} ([-l,l])$ for any $l>0$.  \\
Let us prove Claim 4. Differentiating $k+1$ times in \eqref{AN3} and letting $x=0$ results in 
\[
\textrm{Ai}^{(3+k)} (0)=(\textrm{Ai}'')^{(k+1)} (0)=(k+1) \textrm{Ai} ^{(k)} (0)\quad \forall k\in \N.  
\]
Note that $\textrm{Ai} ''(0)=0$ by \eqref{AN3} and that 
\[
\textrm{Ai} (0)=[3^\frac{2}{3} \Gamma (\frac{2}{3} )  ]^{-1}\ne 0,  \quad 
\textrm{Ai}'(0)= - [3^\frac{1}{3} \Gamma (\frac{1}{3}) ]^{-1} \ne 0 .\]  
We infer that for all $k\in \N$
\[
\textrm{Ai}^{(3k)} (0)=\textrm{Ai} (0) \prod _{j=1}^{k-1} (1+3j), \quad 
\textrm{Ai}^{(3k+1)} (0)=\textrm{Ai}' (0) \prod _{j=1}^{k-1} (2+3j),\quad
\textrm{Ai}^{(3k+2)} (0)=0.
\]
As it was noticed in \cite[Remark 2.7]{MRRparabolic},  we have 
\[
s^i \, i! \le \prod _{j=1}^i (r+js) \le s^i (i+1)!\quad \forall s\in \N, \ \forall r\in [0,s] .  
\]
Thus there is some constant $C_1>0$ such that 
\[
\left\vert \textrm{Ai} ^{(3k+q)} (0) \right\vert 
\le C_1 3^k k!\qquad \forall k\ge 0,  \ \forall q\in \{ 0,1,2\} . 
\]
From Stirling formula we have $(3k)!\sim \frac{\sqrt{3}}{2\pi k} (3^k k!)^3$ so that 
\be
\label{AN6}
\vert \textrm{Ai} ^{(n)} (0)\vert \le C_2 [(n+1)!]^\frac{1}{3} \le C_3 \frac{n!^\frac{1}{3}}{R^n} \qquad 
\forall n\in \N   
\ee
for any $R\in (0,1)$ and some constants $C_2,C_3>0$. The following result comes from \cite{MRRschro,MRRevolution}.
\begin{lem}
\label{lem30}
Let $s\in (0,1)$ and let $(a_n)_{n\ge 0}$ be a sequence such that 
\[
\vert a_n\vert  \le C \frac{n!^s}{R^n} \quad \forall n\ge 0
\]
for some constants $C,R>0$. Then the function $f(x)=\sum_{n\ge 0} a_n\frac{x^n}{n!}$ is Gevrey of order $s$ on $[-l,l]$ for all $l>0$, with 
\[
\vert f^{(m)} (x) \vert \le C\left( \sum_{k\ge 0 } \frac{(2^sl)^k}{R^k k! ^{1-s}} \right)  \frac{2^{sm}}{R^m} m!^s \quad \forall m\in \N , \ \forall x\in [-l,l]. 
\]
\end{lem}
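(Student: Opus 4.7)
The plan is to estimate $f^{(m)}(x)$ directly via the power series expansion, reducing to a single combinatorial inequality on factorials.

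First I would differentiate the series formally term by term: since $f(x)=\sum_{n\ge 0}a_n x^n/n!$, we have
\[
f^{(m)}(x)=\sum_{n\ge m}a_n\frac{x^{n-m}}{(n-m)!}=\sum_{k\ge 0}a_{m+k}\frac{x^k}{k!}.
\]
Before invoking this identity I would verify that the original series and all its derived series converge uniformly on $[-l,l]$. This follows from the hypothesis together with the fact that $(m+k)!^{s}/(R^{m+k}k!)$ tends to zero geometrically once $k$ is large enough relative to $m$, because the exponent $s$ on the factorial in the numerator is strictly less than $1$. So $f\in C^\infty([-l,l])$ and termwise differentiation is legitimate.

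Next, for $x\in[-l,l]$ I would bound
\[
\vert f^{(m)}(x)\vert \le \sum_{k\ge 0}\vert a_{m+k}\vert\frac{l^k}{k!}\le C\sum_{k\ge 0}\frac{(m+k)!^{s}}{R^{m+k}}\frac{l^k}{k!}.
\]
The key combinatorial step is the standard inequality $(m+k)!\le 2^{m+k}\, m!\, k!$, which upon raising to the power $s$ gives $(m+k)!^{s}\le 2^{s(m+k)}m!^{s}k!^{s}$. Substituting this and separating the $m$-dependent and $k$-dependent factors yields
\[
\vert f^{(m)}(x)\vert \le C\,\frac{2^{sm}m!^{s}}{R^m}\sum_{k\ge 0}\frac{(2^{s}l)^k}{R^k\, k!^{1-s}},
\]
which is precisely the claimed estimate.

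The only real point requiring care is convergence of the residual series $\sum_{k\ge 0}(2^{s}l)^k/(R^k k!^{1-s})$, but since $1-s>0$, the factor $k!^{1-s}$ grows super-geometrically, so the series converges for every choice of $l$ and $R>0$. This also provides a uniform bound in $x$, so the estimate holds on the whole interval $[-l,l]$. I do not expect any genuine obstacle here; the main ingredient is simply the factorial inequality and the observation that $s<1$ tames the remainder sum.
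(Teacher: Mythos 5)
Your argument is correct, and it is exactly the computation that the statement is built around: the paper itself gives no proof of this lemma (it is quoted from the references \cite{MRRschro,MRRevolution}), but the explicit constant $C\bigl(\sum_{k\ge 0}(2^{s}l)^{k}/(R^{k}k!^{1-s})\bigr)2^{sm}m!^{s}/R^{m}$ in the statement is precisely what falls out of termwise differentiation, the bound $(m+k)!\le 2^{m+k}\,m!\,k!$ raised to the power $s$, and the absorption of $k!^{s}/k!=k!^{-(1-s)}$ into the convergent residual sum. Your justification of termwise differentiation via uniform convergence of all derived series on $[-l,l]$ (using $s<1$ so that $k!^{1-s}$ beats the geometric factor $(2^{s}l/R)^{k}$) is the only point needing care, and you handle it adequately.
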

It follows from \eqref{AN6} and Lemma \ref{lem30} that  the Airy function is Gevrey of order $1/3$ on each 
interval $[-l,l]$ with 
\[ 
\vert \textrm{Ai} ^{(m)} (x) \vert  \le C_4(R,l) \frac{2^\frac{m}{3}}{R^m} m! ^\frac{1}{3} \qquad \forall m\in \N, \ \forall x\in [-l,l]. 
\]

Claim 4 is proved. Let us go back to the proof of Proposition \ref{prop20}.   
The fundamental solution of the simplified linear Korteweg-de Vries equation \eqref{AN1} is given by 
\be
E (x,t)= \frac{1}{ (3t)^\frac{1}{3} } \textrm{Ai} \left(  \frac{x}{ (3t)^\frac{1}{3} } \right), 
\ee 
so that the solution of \eqref{AN1}-\eqref{AN2} for an initial value $y_0\in L^2(\R )$ supported in $[-L,L]$ reads
\[
y(x,t)=[E(.,t)*y_0](x)=  \frac{1}{ (3t)^\frac{1}{3} } \int_{-L}^L \textrm{Ai} 
\left(   \frac{x-s}{ (3t)^\frac{1}{3} } \right)  y_0(s)\,  ds. 
\] 
It is clear that $y$ is of class $C^\infty$ on $\R _x \times (0,+\infty )_t$. Pick any $l>0$ and any 
$0<\varepsilon <T$.  Since $y$ solves \eqref{AN1}, we have for all $x\in [-l,l]$, $t\in [\varepsilon , T]$
and $p,q\in \N$ that 
\begin{eqnarray*}
\vert \partial _x^p\partial _t^q y(x,t) \vert 
&=& \vert \partial _x ^{p+3q} y(x,t)\vert \\
&\le& K \frac{2^\frac{p+3q}{3} }{(3t)^{\frac{1+p+3q} {3}}  R^{p+3q} } (p+3q) ! ^\frac{1}{3} \Vert y_0\Vert _{L^1(-L,L)} \\
&\le& K \frac{2^\frac{p+3q}{3} }{ (3\varepsilon)^{\frac{1+p+3q} {3}}   R^{p+3q} } \big( 2^{p+3q} p! (3q)!  \big)^\frac{1}{3} \Vert y_0\Vert _{L^1(-L,L)}\\
&\le& \frac{K'}{R_1^pR_2^q } p !^\frac{1}{3} q!  \Vert y_0\Vert _{L^1(-L,L)} \\
\end{eqnarray*}
for some constants $K,K',R_1,R_2>0$ which depend on $l$, $L$, $\varepsilon$ and $T$. 
The proof of Proposition \ref{prop20} is complete. \qed
\section*{Acknowledgements} This work was done when the second author (IR)  was visiting CAS, MINES ParisTech. The third author (LR) was supported by the ANR project Finite4SoS (ANR-15-CE23-0007).



\begin{thebibliography}{33}
\addcontentsline{toc}{section}{References}

\bibitem{CCZ} M. A. Caicedo, R. de A. Capistrano Filho, B.-Y. Zhang, {\em Neumann boundary controllability of the Korteweg-de Vries equation on a bounded domain}, SIAM J. Control Optim. {\bf 55} (2017), no. 6, 3503--3532.   
\bibitem{cerpa} E. Cerpa, {\em  Control of a Korteweg-de Vries equation: a tutorial}, Math. Control Relat. Fields {\bf 4} (2014), no. 1, 45--99.  
\bibitem{CRZ} E. Cerpa, I. Rivas, B.-Y. Zhang, {\em Boundary controllability of the Korteweg-de Vries equation on a bounded domain},  SIAM J. Control Optim. {\bf 51} (2013), no. 4, 2976--3010. 
\bibitem{DE} J. Dard\'e, S. Ervedoza, {\em On the reachable set for the one-dimensional heat equation}, SIAM J. Control Optim., to appear. 
\bibitem{GG2008} O. Glass, S. Guerrero, {\em  Some exact controllability results for the linear KdV equation and uniform controllability in the zero-dispersion limit}, Asymptotic Analysis {\bf 60} (2008), 61--100.  
\bibitem{GG2010} O. Glass, S. Guerrero, {\em Controllability of the Korteweg-de Vries equation from the right Dirichlet 
boundary condition}, System $\&$ Control Letters {\bf 59} (2010) 390--395.
\bibitem{hormander} L. H\"ormander, The analysis of linear partial differential operators. I. Berlin Heidelberg New York,  Springer 1983. 
  \bibitem{MRRheat} P. Martin, L. Rosier, P. Rouchon, {\em Null controllability of the heat equation using flatness}, Automatica {\bf 50} (2014), 
  No. 12, 3067--3076.   
\bibitem{MRRschro} P. Martin, L. Rosier, P. Rouchon, {\em Controllability of the 1D Schr\"odinger equation by the flatness approach},
IFAC Proceedings Volumes, 47(3): 646--651, 2014. 19th IFAC World Congress.  
\bibitem{MRRparabolic}  P. Martin, L. Rosier, P. Rouchon, {\em Null controllability of one-dimensional parabolic equations be the flatness approach},
SIAM J. Control Optim. {\bf 54} (2016), No. 1, 198---220. 
\bibitem{MRRreachable} P. Martin, L. Rosier, P. Rouchon, {\em On the reachable states for the boundary control of the heat equation}, Appl. Math.  Res.  Express. AMRX 2016, no. 2, 181--216. 
\bibitem{MRRschrodinger} P. Martin, L. Rosier, P. Rouchon, {\em Controllability of the 1D Schr\"odinger equation using flatness}, Automatica {\bf 91} (2018), 208--216.
\bibitem{MRRevolution}  P. Martin, L. Rosier, P. Rouchon, {\em Controllability of some evolution equations by the flatness approach}, in press.  
\bibitem{PR}  A. F. Pazoto, L. Rosier, {\em Stabilization of a Boussinesq system of 
KdV-KdV type}, Systems $\&$ Control Letters {\bf 57} (2008), 595--601. 
\bibitem{R1997} L. Rosier, {\em Exact boundary controllability for the Korteweg-de Vries equation on a bounded domain}, ESAIM 
Control Optim. Calc. Var. {\bf 2} (1997), 33--55.
\bibitem{R2002} L. Rosier, {\em A fundamental solution supported in a strip for a dispersive equation. Special issue in memory of Jacques-Louis Lions}, Comput. Appl. Math. {\bf 21} (2002), no. 1, 355--367.    
\bibitem{R2004} L. Rosier, {\em Control of the surface of a fluid by a wavemaker}, ESAIM Control Optim. Calc. Var. {\bf 10} (3) (2004), 346--380. 
\bibitem{RZsurvey} L. Rosier, B.-Y.  Zhang, {\em Control and stabilization of the Korteweg-de Vries equation: recent progresses.}
J. Syst. Sci. Complex. {\bf 22} (2009), no. 4, 647--682.    
\end{thebibliography}
\end{document}